\DeclareMathAlphabet{\mathpzc}{OT1}{pzc}{m}{it}
\newtheorem{theorem}{Theorem}[section]}
\newtheorem{lemma}[theorem]{Lemma}}
\newtheorem{proposition}[theorem]{Proposition}}
\DeclareRobustCommand{\rchi}{{\mathpalette\irchi\relax}}
\newcommand{\irchi}[2]{\raisebox{\depth}{$#1\chi$}} 
\newenvironment{proof}{\noindent\textbf{Proof:\ }}{$\hfill{\bullet}$}
\numberwithin{equation}{section}
\title{\textsc{Dirichlet eigenvalues of the Laplacian on full one-sided shift space}}                
\author{Shrihari Sridharan\footnote{Indian Institute of Science Education and Research Thiruvananthapuram (IISER-TVM), Maruthamala P.O., Vithura, Thiruvananthapuram, INDIA. PIN 695 551.} \\ {\tt shrihari@iisertvm.ac.in}  \bigskip \\ Sharvari Neetin Tikekar\footnote{Indian Institute of Science Education and Research Thiruvananthapuram (IISER-TVM), Maruthamala P.O., Vithura, Thiruvananthapuram, INDIA. PIN 695 551.} \\ {\tt sharvai.tikekar14@iisertvm.ac.in}} 
\date{\today}
\begin{document}

\maketitle
\thispagestyle{empty}

\bigskip 

\begin{abstract} 
\noindent
The full one sided shift space over finite symbols is approximated by an increasing sequence of finite subsets of the space. The Laplacian on the space is then defined as a renormalised limit of the difference operators defined on these subsets. In this work, we determine the spectrum of these difference operators completely, using the method of spectral decimation. Further, we prove that under certain conditions, the renormalised eigenvalues of the difference operators converge to an eigenvalue of the Laplacian. 
\end{abstract}
\bigskip \bigskip

\begin{tabular}{l c l}
\textbf{Keywords} & : & Laplacian on the shift space; \\ & & Spectral decimation; \\ & & Dirichlet spectrum. 
\\
\textbf{AMS Subject Classifications} & : & 47A75, 39A70, 37B10.
\end{tabular}
\bigskip 
\bigskip

\newpage
\section{Introduction}

\noindent 
Symbolic dynamics is a branch of study in mathematics that began, in \cite{hadamard}, as a means to understand geodesic flows in negatively curved surfaces. However, over the years, this branch of mathematics has proved itself to be useful to understand various dynamical systems, for example, rational maps restricted on their respective Julia sets \cite{lyubich}, Axiom A diffeomorphisms restricted on a basic set \cite{bowen1, bowen2}, expanding maps restricted on a compact subset of a Riemannian manifold \cite{bowen3}, {\it etc.} Apart from being useful in the modelling of various dynamical systems, this branch of mathematics is, in itself, an interesting abstract space to work with, thus captivating the minds of mathematicians and physicists alike. For detailed discussion on the topic, see \cite{MH, bks, bmn, kitchens, parrypolli, LM}, {\it etc.} 
\medskip 

\noindent 
In \cite{arxiv}, the authors took the first step towards investigating certain analytical aspects of the symbolic space, from the point of view of calculus, owing to the topology of the space. The symbolic space over $N (> 1)$ symbols is the space of one sided sequences, defined as 
\[ \Sigma_{N}^{+}\ \ :=\ \ \bigg\{ x = (x_{1}\, x_{2}\, \cdots)\ :\ x_{i} \in \left\{ 1, 2, \cdots, N \right\} \bigg\}. \] 
The topology on $\Sigma_{N}^{+}$ is dictated by the metric that separates two points by taking into consideration, where the two points begin to disagree. The authors in \cite{arxiv} build a Laplacian on $\Sigma_{N}^{+}$, as a scalar limit of difference operators $\{ H_{m} \}_{m\, \ge\, 0}$ defined on certain nested sequence $\{ V_{m}\}_{m\, \ge\, 0}$ of finite subsets of $\Sigma_{N}^{+}$. This Laplacian resembles the discrete approximation of the Laplacian in the classical setting. The effective resistance metric, that arises due to the energy form in Kigami's analysis on post-critically finite self-similar sets \cite{kigamimetric}, has no significance in this construction of the Laplacian, as it turns out to be discrete, as proved in \cite{arxiv}. The authors further solve the Dirichlet boundary value problem in the symbolic settings, proving the existence of the solution and its uniqueness upto harmonic functions.
\medskip

\noindent
A natural question that arises now, is to determine the complete Dirichlet spectrum of the Laplacian on $\Sigma_{N}^{+}$. In this paper, we focus on determining a particular type of the eigenvalues of the Laplacian, which are obtained as a renormalised limit of the eigenvalues of $H_{m}$. The structure of the eigenvalues of $H_{m}$ can be studied through a process called decimation, as designed in \cite{rammal, ramtoul}. The spectral decimation method was then employed successfully by Shima in \cite{shima}, Fukushima and Shima in \cite{fuku_shima}, to study the eigenvalues of the Laplacian on the Sierpinski gasket.
\medskip 

\noindent 
Our main goal in the present paper, is to determine the eigenvalues of $H_{m}$ under the Dirichlet boundary conditions and their multiplicities explicitly. We also prove that a particular sequence $\{ \lambda_{m} \}_{m\, \ge\, 1}$ of Dirichlet eigenvalues of the sequence of difference operators $\{ H_{m} \}$ with some renormalisation factor, converges to a Dirichlet eigenvalue of the Laplacian $\Delta$. However, we wish to point out that the question whether these eigenvalues constitute the entire spectrum of the Laplacian, still remains open.   
\medskip

\noindent
The paper is organised as follows. In section \eqref{preliminaries}, we present the basic definitions of the difference operators and the Laplacian on the full one sided shift space. In this paper, we extensively use the notations used in \cite{arxiv}, where we defined the Laplacian on the full shift space. Interested readers may refer \cite{arxiv} for complete details. In section \eqref{S decimation}, we construct an algorithm to extend an eigenvalue say $\lambda_{m - 1}$ of $H_{m - 1}$ with corresponding eigenfucntion $u_{m-1}$ to an eigenvalue $\lambda_{m}$ of $H_{m}$ with corresponding eigenfunction $u_{m}$. This method is due to Rammal \cite{rammal} and is termed as spectral decimation method. In section \eqref{cts ext}, we prove that such sequence of eigenfunctions can be continuously extended to the entire space $\Sigma_{N}^{+}$. Sections \eqref{forbidden ev} and \eqref{D spectrum} are devoted to determine the complete spectrum of the difference operators $H_{m}$ under the Dirichlet boundary conditions. In section \eqref{main theorem}, we state and prove the main theorem of this paper, proving the existence of a Dirichlet eigenvalue of the Laplacian on $\Sigma_{N}^{+}$.

\section{Preliminaries}
\label{preliminaries}
In this section, we outline the basic settings of the shift space necessary to define the Laplacian. The reader may consult \cite{arxiv} for complete details. The one-sided symbolic space on $N>1$ symbols, say $S := \{ 1, 2, \cdots, N \}$, is defined as,
\[ \Sigma_{N}^{+}\ \ :=\ \ S^{\mathbb{N}}\ \ =\ \ \{ x = (x_{1}\, x_{2}\, \cdots)\ :\ x_{i} \in S \}. \]
This space is equipped with the usual left shift operator $\sigma : \Sigma_{N}^{+} \longrightarrow \Sigma_{N}^{+}$, given by, $\sigma (x_{1}\, x_{2}\, \cdots) := (x_{2}\, x_{3}\, \cdots)$. The pair $(\Sigma_{N}^{+}, \sigma)$ is called the \emph{full one-sided shift on $N$ symbols}. The standard metric on $\Sigma_{N}^{+}$ is defined as,
\[ d(x, y)\ \ :=\ \ \frac{1}{2^{\, \rho(x, y)}},\ \ \ \text{where}\ \ \rho(x, y)\ :=\ \min \{ i : x_{i} \ne y_{i} \}\ \ \text{with}\ \rho(x, x)\ :=\ \infty. \] 
The symbol set $S$ is endowed with a discrete metric and thus, the space $\Sigma_{N}^{+}$ has a product topology, which is generated by the metric $d$ defined above. A \emph{cylinder set of length $m$}, $m \ge 1$, is a set given by, 
\[ [p_{1}\, \cdots\, p_{m}]\ \ :=\ \ \left\lbrace x \in \Sigma_{N}^{+} : x_{1} = p_{1}\, ,\, \cdots\, ,\, x_{m} = p_{m} \right\rbrace, \]
where the initial $m$ letters of a word are fixed. In the product topology of $\Sigma_{N}^{+}$, the cylinder sets are both closed and open, and they form a basis for the topology. Observe that the cylinder sets can be positioned anywhere in a word, but for simplicity, we fix them at the initial co-ordinates. It is an easy observation that the space $\Sigma_{N}^{+}$ is totally disconnected, compact metric space. The shift map $\sigma$ is a non-invertible, continuous surjection that has $N$ local inverse branches. 
\medskip 

\noindent
The equidistributed Bernoulli measure $\mu$ on $\Sigma_{N}^{+}$ is  determined by its restriction to the semi-algebra consisting of the cylinder sets, which in turn generate the Borel sigma algebra on $\Sigma_{N}^{+} $, as, 
\begin{equation} 
\label{measure} 
\mu \left( [p_{1}\, \cdots\, p_{m}] \right)\ \ :=\ \ \frac{1}{N^{m}}. 
\end{equation}   
For any $m \ge 1$, consider a finite word $w = (w_{1}\, w_{2}\, \cdots\, w_{m})$ of length $|w| = m$. Define a map $\sigma_{w} := \sigma_{w_{1}} \circ \sigma_{w_{2}} \circ \cdots \circ \sigma_{w_{m}} : \Sigma_{N}^{+} \longrightarrow [w_{1}\, w_{2}\, \cdots\, w_{m}]$ that concatenates the word $w$ as a prefix to the words in $\Sigma_{N}^{+}$. We then have a self-similar structure on the shift space as,
\[ \Sigma_{N}^{+} = \bigcup\limits_{ \left\lbrace  w\,:\, |w|\, =\, m \right\rbrace } \sigma_{w} (\Sigma_{N}^{+}). \]
Let $V_{0}$ be the set of all fixed points of $\sigma$, namely,
\[ V_{0} \ := \ \left\lbrace (\dot{1})\, ,\, (\dot{2})\, ,\, \cdots\, ,\, (\dot{N})\, \right\rbrace,\ \text{where}\ (\dot{l})\ =\ (l\, l\, l\, \cdots\, )\ \text{for}\ l \in S. \] 
Self-similarity helps us to construct an increasing sequence $\{  V_{m}\}_{m \ge 1}$ of subsets of $\Sigma_{N}^{+}$ as,
\[ V_{m}\ \ := \ \  \bigcup\limits_ {\left\lbrace  w\,:\, |w|\, =\, m \right\rbrace} \sigma_{w}\, (V_{0}), \]
such that the set $V_{*} := \bigcup\limits_{m\, \ge\, 0} V_{m} $ is dense in $\Sigma_{N}^{+}$. Any point $p=(p_{1}\, \cdots\, p_{m}\, p_{m + 1}\, p_{m+1}\, \cdots \, ) \in V_{m}$ is denoted by $(p_{1}\,  \cdots\, p_{m}\, \dot{p}_{m + 1})$. In particular, if $p \in V_{m} \setminus V_{m - 1}$, we have  $p_{m} \ne p_{m + 1}$. 
\medskip

\noindent
Let $m \ge 0$. Any two  points $p = (p_{1}\,  \cdots\, p_{m}\, \dot{p}_{m + 1} )$ and $ q = (q_{1}\,  \cdots\, q_{m}\, \dot{q}_{m + 1})$  in $V_{m}$ are  said to be \emph{$m$-related}, denoted by $p \sim_{m} q$, if $p_{i} = q_{i}$ for $1 \le i \le m$. This is an equivalence relation on $V_{m}$. The equivalence class of a point $p$ contains exactly $N$ points of $V_{m}$ and is given by, 
\[ \left. [p_{1}\,\cdots\, p_{m}] \right|_{V_{m}}\ \ :=\ \ \left\lbrace (p_{1}\, p_{2}\, \cdots\, p_{m}\,\dot{l})\ :\ l \in S \right\rbrace\ \ =\ \ [p_{1}\,\cdots\, p_{m}]\, \cap\, V_{m}. \] 
The \emph{deleted neighbourhood of $p$ in $V_{m}$}, denoted by $\mathcal{U}_{p,\, m}$, is defined as,
\[ \mathcal{U}_{p,\, m} \ \ := \ \ \left. [p_{1}\,\cdots\, p_{m}] \right|_{V_{m}} \setminus \left\lbrace p \right\rbrace \]  
which consists of the $N-1$ points that are $m$-related to $p$, other than $p$. These points are called as the \emph{immediate neighbours} of $p$ in $V_{m}$, since they are the closest to $p$ at a distance $\frac{1}{2^{m + 1}}$ in $V_{m}$. We fix the notation $q^{1},\, q^{2},\,\cdots, \, q^{N-1} $, for the immediate neighbours of $p$. In particular, if $p \in V_{m} \setminus V_{m-1}$, then one of these immediate neighbours of $p$, say $q^{N-1} := (p_{1}\, p_{2}\, \cdots\, p_{m - 1}\, \dot{p}_{m}) $ belongs to $V_{m-1}$ and the remaining $N-2$ neighbours form the set 
\[ U_{p,\, m}\ \ :=\ \ \left\lbrace  (p_{1}\, p_{2}\, \cdots\, p_{m}\, \dot{l}\, )\ :\ l \ne p_{m} \ \text{and}\ l \ne p_{m + 1} \right\rbrace \ = \ \left\lbrace q^{1},\, q^{2},\,\cdots, \, q^{N-2} \right\rbrace \ \subset\ \ V_{m} \setminus V_{m - 1}. \]
\medskip

\noindent
Let $ (\dot{l}) \in V_{0}$ and $p \in V_{m} \setminus V_{m-1}$. Choose $n_{1},\, n_{2},\, \cdots\, ,n_{d} \in \mathbb{N}$ such that $1 \le n_{1} < n_{2} < \cdots < n_{d} = m$, which are the only coordinates of $p$ satisfying $p_{n_{i}} \ne p_{n_{i}+1}$. Consider the points
\[r^{0} = (\dot{p}_{1}) \in V_{0};\ \ r^{n_{i}} = (p_{1}\, p_{2}\, \cdots\, p_{n_{i}}\, \dot{p}_{n_{i}+1}) \in V_{n_{i}} \setminus V_{n_{i} - 1}\ \ \text{and}\ \ r^{n_{d}} = p.   \]
Note that these points form a chain connecting $(\dot{l}) $ and $p$ with the relation
\[ (\dot{l}) \sim_{0 } r^{0} \ \ \ \ \text{and}\ \ \ \ r^{n_{i - 1}} \sim_{n_{i}} r^{n_{i}}\ \ \text{implying}\ \ r^{n_{i - 1}} \in \mathcal{U}_{r^{n_{i}},\, n_{i}}\ \ \ \text{for}\ 1 \le i \le d. \] 
Even though the space $\Sigma_{N}^{+}$ is totally disconnected, owing to the arguments in \cite{arxiv}, the set $V_{0}$ is defined as the \emph{boundary} of $\Sigma_{N}^{+}$. For $m \ge 0$, the set of all real valued functions on $V_m$ is denoted by $\ell(V_m)$. The standard inner product on $\ell(V_m)$ is given by, $\left\langle u,v \right\rangle = \sum\limits_{p \in V_m} u(p) v(p)$. 
\medskip

\noindent
The \emph{Laplacian on $\Sigma_{N}^{+}$} is now defined as a renormalized limit of the difference operators $H_{m}$ on $\ell(V_{m})$, see \cite{arxiv}. The matrix representation of these operators is easier to deal with, and takes into consideration the ordering of points in $V_{m}$. Whenever a point $p$ appears before $q$ in the ordering of $V_{m}$, we denote it by $p \prec q$ and in that case, a row (column) corresponding to a point $p$ appears to the left (top) to a row (column) corresponding to a point $q$, in the matrix representation of $H_{m}$. The set $V_{0}$ can be written in an ascending order of its elements as, 
\[ V_{0} \ \ = \ \ \left\{ (\dot{1}) \prec (\dot{2}) \prec \cdots \prec (\dot{N}) \right\}.  \]
That is, for $k,\, l \in S,\ (\dot{k}) \prec (\dot{l})$ if and only if $k < l$. Considering the symbol set $S$ as a subset of the natural numbers $\mathbb{N}$, the ordering on $S$ is assumed to be the natural ordering on $\mathbb{N}$. A difference operator $H_{0}$ on $V_{0}$ is given by,
\begin{equation*}
\left( H_{0} \right)_{pq}\ \ :=\ \ 
\begin{cases} 
1 & \text{if}\ q \in \mathcal{U}_{p,\, 0} \\
-\, (N - 1) & \text{if}\ p = q,
\end{cases} 
\end{equation*} 
where $\left( H_{0} \right)_{pq}$ denotes the $(p, q)$-th entry of the matrix $H_{0}$ corresponding to the row for $p \in V_{0}$ and the column for $q \in V_{0}$. The action of $H_{0}$ on any $u \in \ell(V_{0})$ is explicitly described as,
\begin{equation} 
\label{defn H_0} 
(H_{0} (u))\, (\dot{l})\ \ =\ \  \sum_{q\, \in\, V_{0}} \left( H_{0} \right)_{pq}\, u(q) \ \ =\ \ -\, (N - 1)\, u(\dot{l}) + \sum\limits_{\substack{k\, \in\, S \\ k\, \ne\, l}} u(\dot{k}). 
\end{equation}
Let $m \ge 1$. For $l \in S$, denote by $\sigma_{l} : \Sigma_{N}^{+} \longrightarrow \left[ l\right] $ the map $\sigma_{l}(x):= (l\,x_{1}\,x_{2}\cdots)$. Then $V_{m} = \bigcup\limits_{l\, \in\, S} \sigma_{l}(V_{m-1}) $. Proceeding inductively, we define an order on the elements of $V_{m}$. Here $V_{m-1}$ appears first in the ordering of $V_{m}$ retaining the order therein. Now, for any $p,\,q \in V_{m-1}$ and $i,\,j \in S$, we have $\sigma_{i}(p) \prec \sigma_{j}(p) $ if and only if $i < j$ and $ \sigma_{i}(p) \prec \sigma_{j}(q)$ if and only if $ p \prec q$. Thus, the elements in $V_{m}$ can be listed in their ascending order as,  
\begin{eqnarray*} 
V_{m} & = & \Bigg\{\ \underbrace{(\dot{1}) \prec \cdots \prec (\dot{N})}_{V_{0}} \prec \underbrace{(2 \dot{1}) \prec \cdots \prec (N \dot{1}) \prec \cdots \prec (1 \dot{N}) \prec \cdots \prec (N - 1\, \dot{N})}_{V_{1} \setminus V_{0}} \prec  \\ 
& & \\
& & \hspace{+2cm} \underbrace{\cdots \prec \cdots \prec \cdots \prec \cdots \prec}_{V_{2} \setminus V_{1},\, \cdots,\, V_{m - 1} \setminus V_{m - 2}} \\
& & \hspace{+5cm} \underbrace{(\underbrace{1\, \cdots\, 1}_{m - 1}\, 2\, \dot{1}) \prec \cdots \prec (\underbrace{N\, \cdots\, N}_{m - 1}\, N - 1\, \dot{N})}_{V_{m} \setminus V_{m - 1}}\ \Bigg\}. 
\end{eqnarray*} 

\noindent
An appropriate difference operator $H_{m}$ on $\ell (V_{m})$ is then defined as follows:

\begin{eqnarray*}
(H_{m})_{pq}\ \ & = &\ \ 
\begin{cases} 
-(m + 1)\,(N-1) & \ \ \text{if}\ p = q \in V_{0},\\
-(m-n+1)\,(N-1) & \ \ \text{if}\ p = q \in V_{n} \setminus V_{n-1} \ \text{for} \  0< n \le m,\\
1 & \ \ \text{if} \ p \in V_{0} \ \text{and} \ q \in \mathcal{U}_{p,\, i}\ \text{for} \ 0 \le i \le m,\ \text{or},\\
& \ \ \ \ \ p \in V_{n} \setminus V_{n-1} \ \text{and} \ q \in \mathcal{U}_{p,\, i}\ \text{for} \ n \le i \le m, \\
0 & \ \ \text{otherwise}. \\ 
\end{cases}  \nonumber\\ 
\end{eqnarray*}

\noindent
Since $H_{m}$ acts on any $u \in \ell(V_{m})$ according to the equation $ H_{m} u (p)\ = \ \sum\limits_{q\, \in V_{m}}\, (H_{m})_{pq}\,u(q) $, this action can be described seperately on $V_{m} \setminus V_{m - 1}$ and $V_{m-1}$ as follows:
\medskip 

\noindent 
For $p \in V_{m} \setminus V_{m - 1}$ we have,
\begin{eqnarray} 
\label{defn_of_Hm_u}
H_{m} u\, (p) \ \ = \ \ 
\begin{cases}
-\, (N - 1)\, u(p)\, +\, \sum\limits_{q \, \in \,\mathcal{U}_{p, m}} u(q)  &\text{if} \ \ p \in V_{m} \setminus V_{m - 1}, \\
H_{m - 1} u\,(p)\, +\, \left[ -\, (N - 1)\, u(p)\, + \sum\limits_{q \, \in \, \mathcal{U}_{p, m}} u(q) \right] &\text{if} \ \ p \in V_{m-1}.
\end{cases}
\end{eqnarray}

\noindent
Corresponding to the equidistributed Bernoulli measure $\mu$ defined in equation \eqref{measure}, consider the set
\begin{eqnarray} 
\label{laplacian}
D_{\mu} & := & \Bigg\{ u \in \mathcal{C}(\Sigma_{N}^{+}) : \exists\, f \in \mathcal{C}(\Sigma_{N}^{+})\ \text{satisfying} \nonumber \\ 
& & \hspace{+2cm} \lim_{m\, \to\, \infty}\ \max_{p\, \in\, V_{m} \setminus V_{m - 1}} \left| \frac{H_{m} u (p)}{\mu([p_{1} p_{2} \cdots p_{m + 1}])}\, - f(p) \right| = 0 \Bigg\}. 
\end{eqnarray} 
Then such a function $f$ corresponding to the function $u \in D_{\mu} $ is called the \emph{Laplacian} of $u$ and is denoted by $f = \Delta_{\mu} u$. It follows directly from the definition, that the Laplacian can be expressed as a pointwise limit as,
\begin{equation} 
\label{pointwise laplacian}
f(x)\ \ =\ \ \Delta u \,(x)\ \ =\ \ \lim_{m\, \rightarrow\, \infty} N^{m + 1}\, H_{m} u (p^{m}).
\end{equation}
Here, $\left\lbrace p^{m} = (x_{1}\, x_{2}\, \cdots\, x_{m}\, \dot{l}) \, \in V_{m} \setminus V_{m-1} \right\rbrace_{m\, \ge\, 1} $ is a sequence of points converging to $x$.

\section{Spectral decimation}
\label{S decimation}
In the following sections, we aim to determine the Dirichlet spectrum of the Laplacian. The standard eigenvalue equation for $ \Delta$ is written as, 
\[ \Delta u = -\lambda u \ \ \text{on } \ \ \Sigma_{N}^{+} \setminus V_{0}. \]  
Here, $\lambda $ is known as the eigenvalue and $u$, the corresponding eigenfunction of $\Delta$. Since the Laplacian is the limit of the sequence of difference operators $H_{m}$ with the scaling factor ${N^{m+1}}$, the first step towards determining the spectrum of the Laplacian, is to study the structure of the eigenvalues of every $H_{m}$. The eigenvalue equation for $H_{m}$ looks like,
\[H_{m} u_{m}\ \ =\ \ - \lambda_{m} u_{m} \ \ \text{on } \ \ V_{m} \setminus V_{0}. \]
The function $u_{m} \in \ell(V_{m})$ is the eigenfunction of $H_{m}$ corresponding to the eigenvalue $\lambda_{m}$. The natural choice for the eigenvalue of $\Delta $ is then 
\[\lambda \ \ =\ \ \lim\limits_{m\, \rightarrow\, \infty} N^{m+1} \lambda_{m},\] whenever the limit exists. 
\medskip 

\noindent 
We begin with finding an extension of an eigenfunction of $H_{m-1}$ to an eigenfunction of $H_{m}$ and the relation between these corresponding eigenvalues. This method is called the \emph{spectral decimation} and is discussed for Sierpinski gasket in detail in \cite{fuku_shima}, \cite{shima}. We adapt the idea in the settings of symbolic space.
\medskip

\noindent
Let $u_{m-1} \in \ell(V_{m-1})$ be such that it is nonzero on the set $V_{m-1} \setminus V_{m-2}$ and $ u_{m} \in \ell(V_{m})$ be its extension to $V_m$. Let $ \lambda_{m-1},\, \lambda_{m} \in \mathbb{ R}$ be such that $\lambda_{m-1} \neq 0$ and $\lambda_{m} \neq 0,1,N $. We call these values as the \emph{forbidden eigenvalues}. It will become clear in the proof of the next lemma, as to why these values are named so. We deal with the cases when $\lambda_{m-1} = 0$ and $\lambda_{m} \,=\, 0,1 \text{ or }N $ in a later section.

\begin{lemma}\label{lem1}
For $m \ge 1$, any eigenfunction $u_{m}$ of $H_{m}$ with eigenvalue $\lambda_{m}$ satisfies, 
\begin{equation} \label{ext to V_m}
u_{m}(p) = \frac{u_{m}(q^{N-1})}{(1-\lambda_{m})},
\end{equation}
for any $p \in V_{m} \setminus V_{m-1}$ and  $q^{N-1} \in \mathcal{U}_{p,\,m}$, where the notation is as defined in section \eqref{preliminaries}.
\end{lemma}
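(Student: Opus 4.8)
The plan is to read off the claim directly from the defining action of $H_m$ on $V_m \setminus V_{m-1}$ in equation \eqref{defn_of_Hm_u}. Fix $p \in V_m \setminus V_{m-1}$ and recall that its deleted neighbourhood $\mathcal{U}_{p,m}$ consists of the $N-1$ points $q^1, \dots, q^{N-1}$ that are $m$-related to $p$, of which $q^{N-1} = (p_1 \cdots p_{m-1}\, \dot p_m) \in V_{m-1}$ while $q^1, \dots, q^{N-2}$ lie in $U_{p,m} \subset V_m \setminus V_{m-1}$. The first line of \eqref{defn_of_Hm_u} gives the eigenvalue equation at $p$ as
\[
-(N-1)\,u_m(p) + \sum_{q \in \mathcal{U}_{p,m}} u_m(q) = -\lambda_m\, u_m(p).
\]

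The key observation is a symmetry argument: every point $q^j \in U_{p,m}$ with $1 \le j \le N-2$ has exactly the same deleted neighbourhood in $V_m$ as $p$ does, namely $\mathcal{U}_{q^j, m} = (\mathcal{U}_{p,m} \setminus \{q^j\}) \cup \{p\}$, since being $m$-related is an equivalence relation and these points share the first $m$ coordinates. Hence each such $q^j$ satisfies an identical eigenvalue equation of the first type in \eqref{defn_of_Hm_u}. Writing these $N-1$ equations together (one for $p$, one for each $q^j$, $j=1,\dots,N-2$) and using that all of $p, q^1, \dots, q^{N-2}$ see the same total sum $S := u_m(p) + \sum_{i=1}^{N-2} u_m(q^i) + u_m(q^{N-1})$ over their respective closed neighbourhoods, I would subtract pairs of equations to conclude $u_m(p) = u_m(q^1) = \cdots = u_m(q^{N-2})$ — call this common value $a$. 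Substituting back into the equation at $p$ gives $-(N-1)a + (N-2)a + u_m(q^{N-1}) = -\lambda_m a$, i.e. $-a + u_m(q^{N-1}) = -\lambda_m a$, which rearranges to $a(1-\lambda_m) = u_m(q^{N-1})$, and since $\lambda_m \ne 1$ this yields \eqref{ext to V_m}.

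I would then remark that the roles of $q^{N-1}$ and any other neighbour are interchangeable in the following sense: for $p \in V_m \setminus V_{m-1}$, the value $u_m(p)$ is determined by the value of $u_m$ at the unique $m$-related point lying in $V_{m-1}$, which is what the statement asserts with the specific labelling $q^{N-1}$. The only genuine subtlety — and the step I expect to need the most care — is justifying the symmetry claim $\mathcal{U}_{q^j,m} \ni p$ and $\mathcal{U}_{q^j, m} \setminus \{p\} = \mathcal{U}_{p,m} \setminus \{q^j\}$ cleanly from the definitions in section \eqref{preliminaries}, and checking that each $q^j \in U_{p,m}$ indeed obeys the \emph{first} branch of \eqref{defn_of_Hm_u} (it does, precisely because $q^j \in V_m \setminus V_{m-1}$). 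Everything else is the linear-algebra manipulation above, and the hypothesis $\lambda_m \ne 1$ is exactly what is needed to divide; the exclusions $\lambda_m \ne 0, N$ will only become relevant in the subsequent decimation step, not here.
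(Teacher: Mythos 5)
Your overall route is the same as the paper's: write the eigenvalue equations at $p$ and at each $q^{j}\in U_{p,m}$ (all governed by the first branch of \eqref{defn_of_Hm_u}, since these $N-1$ points share the same deleted neighbourhood up to swapping the centre), deduce $u_m(p)=u_m(q^1)=\cdots=u_m(q^{N-2})$, and substitute back. Your final substitution, $-(N-1)a+(N-2)a+u_m(q^{N-1})=-\lambda_m a$, is a slightly more direct way to finish than the paper's detour through summing all the equations first, and it is correct.

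However, your closing remark that ``the exclusions $\lambda_m\ne 0,N$ will only become relevant in the subsequent decimation step, not here'' is wrong, and it hides a real gap in the step you yourself flag as the crux. Carry out the subtraction you propose: the equation at $p$ reads $S-Nu_m(p)=-\lambda_m u_m(p)$ and the equation at $q^i$ reads $S-Nu_m(q^i)=-\lambda_m u_m(q^i)$, where $S$ is the common closed-neighbourhood sum, so subtracting gives
\begin{equation*}
(N-\lambda_m)\bigl(u_m(p)-u_m(q^i)\bigr)\ =\ 0 .
\end{equation*}
You can only conclude $u_m(p)=u_m(q^i)$ if $\lambda_m\ne N$; this is exactly where the paper invokes that hypothesis (``Assuming $\lambda_m\ne N$, we get \dots''). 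The exclusion is not cosmetic: for $\lambda_m=N$ the lemma is genuinely false, as witnessed by the eigenfunctions $\mathcal{G}_{[p_1\cdots p_m]|_{V_m},\,k}$ constructed in the section on the Dirichlet spectrum, which take the values $1$ at $p$, $-1$ at $q^{k}$ and $0$ at $q^{N-1}$, so that $u_m(p)=1$ while $u_m(q^{N-1})/(1-\lambda_m)=0$. So you must state and use $\lambda_m\ne N$ alongside $\lambda_m\ne 1$ in this proof; only $\lambda_m\ne 0$ (and $\lambda_{m-1}\ne 0$) is deferred to the decimation step.
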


\begin{proof}
If $u_{m}$ is an eigenfunction of $H_{m}$, it satisfies the eigenvalue equation $H_{m} u_{m} = -\lambda_{m} u_{m}$ on $V_{m} \setminus V_{0}$ and hence on $V_{m} \setminus V_{m-1} $. Let $p \in V_{m} \setminus V_{m-1}$ and $q^{1}, \, q^{2},\, \cdots,\, q^{N-1} \in \mathcal{U}_{p,\,m}$ as defined in section \eqref{preliminaries}. By the definition of $H_{m}$  as in equation \eqref{defn_of_Hm_u}, we write, 
\begin{equation*} 
H_{m} u_{m}(p)\ \  =\ \ -(N-1)\, u_{m}(p)\,+ \,u_{m}(q^{1})\, + \,u_{m}(q^{2})\, +\, \cdots \,+\,u_{m-1}(q^{N-1}) \ \ = \ \ -\lambda_{m} u_{m}(p) 
\end{equation*}
Similarly we write the eigenvalue equation for the points $q^1,  q^2, \cdots , q^{N-2} $ to obtain the following system:
\begin{equation*}
-(N-1) \,u_{m}(q^{i})\, +\, \left( u_{m}(p)\, +\, \sum\limits_{\substack{j\, =\, 1 \\ j\, \ne\, i}}^{j = N} u_{m}(q^{j}) \right) \ \ =\ \ -\lambda_{m} u_{m}(q^{i}) \quad \text{for } 1 \le i \le N-2.
\end{equation*}
Rearranging the terms in above two equations, we get the system:
\begin{align}
\sum\limits_{j\, =\, 1}^{N-1} u_{m}(q^{j}) \ \ &=\ \ \left[ (N-1)- \lambda_{m} \right]\, u_{m}(p), \label{eq1}  \\
u_{m}(p) + \sum\limits_{\substack{j\, =\, 1 \\ j\, \ne\, i}}^{j\, =\, N} u_{m}(q^{j}) \ \ &=\ \  \left[ (N-1)- \lambda_{m} \right]\, u_{m}(q^{i}) \qquad \text{for } 1 \le i \le N-2. \label{eq2}
\end{align}
Adding all the equations in the above system we get, 
\[ (N-2) \left[ u_{m}(p)\, + \sum\limits_{j\, =\, 1}^{N - 2} u_{m}(q^{j})  \right] \,+\, (N-1)\,u_{m}(q^{N-1}) \ \ =\ \ \left[ N - 1 - \lambda_{m} \right]\,   \left[ u_{m}(p)\, + \sum\limits_{j\, =\, 1}^{N - 2} u_{m}(q^{j})  \right].  \]
For $\lambda_{m} \ne 1$, a simple rearrangement of the terms gives the relation,
\begin{equation} \label{eq3}
u_{m}(p) + \sum\limits_{j\, =\, 1}^{N - 2} u_{m}(q^{j}) \ \ =\ \ \frac{u_{m}(q^{N-1})\, (N-1)}{1-\lambda_{m}}.
\end{equation} 
We now add $u_{m}(p)$ in both sides of equation (\ref{eq1}) and $u_{m}(q^{i})$ in both sides of the respective equation in (\ref{eq2}), to obtain,
\[ u_{m}(p) + \sum\limits_{j\, =\, 1}^{N - 1} u_{m}(q^{j})\ = \ \left[ N- \lambda_{m} \right] u_{m}(p) \ = \ \left[ N- \lambda_{m} \right] u_{m}(q^{1})\ = \ \cdots \ = \ \left[ N- \lambda_{m} \right] u_{m}(q^{N-2}). \]
Assuming $ \lambda_{m} \ne N$, we get,
\begin{equation*}
u_{m}(p) \ \ = \ \ u_{m}(q^{i}) \quad \text{for } 1 \le i \le N-2 .
\end{equation*}
Substituting this condition in equation (\ref{eq3}), we get the required relation, 
\begin{equation}\label{ext_efunction}
u_{m}(p) \ \ = \ \ u_{m}(q^{i})\ \ = \ \ \frac{u_{m - 1}(q^{N-1})}{(1-\lambda_{m})} \qquad \text{for } 1 \le i \le N-2,
\end{equation}
where $u_{m - 1} \equiv u_{m}|_{V_{m - 1}} \in \ell(V_{m - 1})$.
\end{proof}
\medskip

\noindent
Now suppose that $u_{m-1}$ is an eigenfunction of $H_{m-1}$ with eigenvalue $ \lambda_{m-1}$ and some extension of $u_{m-1}$, say $u_{m}$, is an eigenfunction of $H_{m}$ with eigenvalue $ \lambda_{m}$. We are interested in finding the relationship between the two eigenvalues $\lambda_{m-1} $ and $\lambda_{m}$. Choose a point $q \in V_{m-1} \setminus V_{m-2}$ such that $u_{m-1}(q) \ne 0$. Since $q \in V_{m}$, $u_{m-1}(q)\,=\, u_{m}(q)$. We write the two eigenvalue equations for this point $q$ as,
\begin{align}
H_{m-1} u_{m-1}(q)\ \ &=\ \ -\lambda_{m-1}\,u_{m-1}(q); \nonumber\\
H_{m} u_{m}(q)\ \ &=\ \ -\lambda_{m}\,u_{m}(q). \label{evequation}
\end{align} 
Applying the definition of $H_m$ on $V_{m-1}$ as in equation \eqref{defn_of_Hm_u} and using the relation \eqref{ext_efunction} from the previous lemma we obtain,
\begin{align*}
H_{m} u_{m}(q)\ \ &=\ \ H_{m-1} u_{m-1}\,(q)\,+\, \Bigg[ -(N-1)\,u_{m-1}(q)\,+  \sum\limits_{s \in \mathcal{U}_{q,\,m} } u_{m}(s)  \Bigg]\\
&=\ \ -\lambda_{m-1}u_{m}\,(q) \,+\, \Bigg[ -(N-1)\,u_{m}(q)\,+\,(N-1)\, \frac{u_{m}(q)}{1 - \lambda_m }   \Bigg]. \\
\end{align*}
Substituting this expression for $ H_m u_m\,(q)$ in equation (\ref{evequation}), we get,
\begin{align*}
 -\lambda_{m-1}-(N-1)\, +\, \frac{(N-1)}{1-\lambda_{m}}\ \ =\ \ -\lambda_{m}, \quad \text{as }\ \ u_{m}(q) \ne 0.
\end{align*}
A simple rearrangement yields the relation,
\begin{equation}\label{lambda}
\lambda_{m-1} \ \ =\ \ \frac{\lambda_{m}(N-\lambda_{m})}{(1-\lambda_{m})}.
\end{equation} 
Moreover, solving this as a quadratic equation for $\lambda_{m}$, we get an equivalent relation expressing $\lambda_{m} $ in terms of $\lambda_{m-1}$ as,
\begin{equation}\label{lambda2}
\lambda_{m}\ \ =\ \ \frac{(N+\lambda_{m-1}) + \beta_{m} \sqrt{(N+\lambda_{m-1})^2- 4\lambda_{m-1}} }{2}, \quad \text{where } \beta_{m} = \pm 1.
\end{equation}
\medskip 

\noindent 
 \begin{theorem}[Spectral Decimation method]
\label{decimation}
Let $m \ge 2$. Suppose $\lambda_{m - 1}$ is an eigenvalue of $H_{m - 1}$ with corresponding eigenfunction $u_{m-1}$. Define $\lambda_{m}$ in terms of $\lambda_{m-1} $ according to equation \eqref{lambda} (or equivalently \eqref{lambda2}). Let $u_{m}$ be the extension of $u_{m - 1}$ be given by equation \eqref{ext_efunction}. Then $u_m$ satisfies the eigenvalue equation 
\[H_{m}\,u_{m}\ \ =\ \ -\lambda_{m}\,u_{m} \quad \text{on } V_{m}\setminus V_{0}.\]
Conversely, suppose $u_{m}$ is a $\lambda_{m}$-eigenfunction of $H_{m}$. Let $\lambda_{m-1} \in \mathbb{R}\setminus \{0\}$ be related to $\lambda_{m}$ according to equation (\ref{lambda}). Then, the restriction $u_{m-1} $ on $V_{m-1}$ is an eigenfunction of $H_{m-1}$ with eigenvalue $\lambda_{m-1}$.
\end{theorem}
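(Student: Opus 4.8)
The plan is to prove the two directions of Theorem~\ref{decimation} by working with the two modes of the action of $H_m$ described in equation~\eqref{defn_of_Hm_u}, namely the action on $V_m\setminus V_{m-1}$ and the correction term added on $V_{m-1}$. The algebraic identity~\eqref{lambda} has already been derived in the discussion preceding the theorem, so the real content is to check that the \emph{extended} function $u_m$ satisfies the eigenvalue equation at \emph{every} point of $V_m\setminus V_0$, not merely at one test point where $u_{m-1}$ is nonzero.

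For the forward direction, I would fix $u_m$ to be the extension of $u_{m-1}$ prescribed by~\eqref{ext_efunction}, i.e. $u_m(p)=u_{m-1}(q^{N-1})/(1-\lambda_m)$ for $p\in V_m\setminus V_{m-1}$, where $q^{N-1}$ is the unique immediate neighbour of $p$ lying in $V_{m-1}$. I would then verify the eigenvalue equation in two cases. \emph{Case 1: $p\in V_m\setminus V_{m-1}$.} Here one must check $-(N-1)u_m(p)+\sum_{q\in\mathcal{U}_{p,m}}u_m(q)=-\lambda_m u_m(p)$. Since all $N-2$ neighbours in $U_{p,m}$ are themselves in $V_m\setminus V_{m-1}$ and share the same neighbour $q^{N-1}\in V_{m-1}$, the defining formula~\eqref{ext_efunction} forces $u_m(p)=u_m(q^j)$ for $1\le j\le N-2$; substituting and collecting terms reduces the required identity to the scalar equation $-(N-1)+(N-2)+ (N-1)(1-\lambda_m)^{-1}\cdot\frac{1-\lambda_m}{1}\cdots$, which simplifies to an identity using only $\lambda_m\neq1$ — essentially this is the computation already packaged in Lemma~\ref{lem1}, so I would cite it rather than redo it. \emph{Case 2: $p\in V_{m-1}$ (and $p\notin V_0$).} Using the second line of~\eqref{defn_of_Hm_u}, $H_m u_m(p)=H_{m-1}u_{m-1}(p)+\big[-(N-1)u_{m-1}(p)+\sum_{s\in\mathcal{U}_{p,m}}u_m(s)\big]$. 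Every $s\in\mathcal{U}_{p,m}$ lies in $V_m\setminus V_{m-1}$ and has $p$ itself as its $V_{m-1}$-neighbour, so $u_m(s)=u_{m-1}(p)/(1-\lambda_m)$ for each of the $N-1$ such points. Hence the bracket equals $\big[-(N-1)+\tfrac{N-1}{1-\lambda_m}\big]u_{m-1}(p)$, and using $H_{m-1}u_{m-1}(p)=-\lambda_{m-1}u_{m-1}(p)$ together with~\eqref{lambda} collapses the whole expression to $-\lambda_m u_{m-1}(p)=-\lambda_m u_m(p)$. This is again a scalar manipulation identical to the one done before the theorem, now carried out pointwise.

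For the converse, I would start from a $\lambda_m$-eigenfunction $u_m$ of $H_m$ and define $u_{m-1}:=u_m|_{V_{m-1}}$. Lemma~\ref{lem1} already tells us $u_m$ has the restricted form $u_m(p)=u_{m-1}(q^{N-1})/(1-\lambda_m)$ on $V_m\setminus V_{m-1}$, so the relation between $u_m$ and $u_{m-1}$ is exactly the extension formula. Then for any $p\in V_{m-1}\setminus V_0$ I would run Case~2 above in reverse: expand $H_m u_m(p)=-\lambda_m u_m(p)$ via~\eqref{defn_of_Hm_u}, substitute $u_m(s)=u_{m-1}(p)/(1-\lambda_m)$ on the neighbours, and solve for $H_{m-1}u_{m-1}(p)$, obtaining $H_{m-1}u_{m-1}(p)=\big[-\lambda_m+(N-1)-\tfrac{N-1}{1-\lambda_m}\big]u_{m-1}(p)=-\lambda_{m-1}u_{m-1}(p)$ by~\eqref{lambda}. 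One subtlety: this argument only checks points $p\in V_{m-1}\setminus V_0$ against the \emph{equation on $V_{m-1}\setminus V_0$}, which is exactly what is needed, and it requires $\lambda_m\neq1$ for the substitution to be meaningful — which is guaranteed since $\lambda_{m-1}=\lambda_m(N-\lambda_m)/(1-\lambda_m)$ is finite.

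The main obstacle I anticipate is not any single hard computation but keeping the bookkeeping of neighbours straight: one must be careful that for $p\in V_m\setminus V_{m-1}$ exactly one of its $m$-immediate neighbours ($q^{N-1}$) lies in $V_{m-1}$ while the other $N-2$ lie in $V_m\setminus V_{m-1}$ and all of them share the same $V_{m-1}$-representative, and dually that for $p\in V_{m-1}$ all $N-1$ points of $\mathcal{U}_{p,m}$ lie in $V_m\setminus V_{m-1}$ with $p$ as their common neighbour. These structural facts are exactly the content of the $U_{p,m}$ and $\mathcal{U}_{p,m}$ discussion in Section~\ref{preliminaries}, so I would invoke them explicitly. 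A secondary point worth a sentence is that in the forward direction one should note the extension $u_m$ is well-defined — the formula~\eqref{ext_efunction} assigns a single value to $p$ because $q^{N-1}$ is unique — and that $u_m$ is genuinely an extension, i.e. agrees with $u_{m-1}$ on $V_{m-1}$, which is immediate since the formula is only applied on $V_m\setminus V_{m-1}$.
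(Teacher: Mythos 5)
Your proposal is correct and follows essentially the same route as the paper: verify the eigenvalue equation separately on $V_m\setminus V_{m-1}$ (where the extension formula makes $u_m$ constant on $U_{p,m}\cup\{p\}$) and on $V_{m-1}\setminus V_0$ (where the recursive form of $H_m$ together with relation~\eqref{lambda} collapses everything to $-\lambda_m u_m(p)$), then reverse the second computation for the converse. One small caution in Case~1 of the forward direction: Lemma~\ref{lem1} derives the extension formula \emph{from} the eigenvalue equation, so there you should carry out the direct substitution (as you in fact describe) rather than cite the lemma, whose implication runs the other way.
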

\begin{proof}
To prove the first part of the theorem, we assume that $u_{m-1}$ satisfies the eigenvalue equation $H_{m-1}\,u_{m-1}\, =\, -\lambda_{m-1}\,u_{m-1} $ on $V_{m-1} \setminus V_{0}$. Suppose that $u_{m-1}$ is extended to a function $u_{m} \in \ell(V_{m})$ according to the equation \eqref{ext_efunction}. For $\lambda_{m}$ related to $\lambda_{m-1}$ as given by equation \eqref{lambda}, we prove that $u_m$ satisfies the eigenvalue equation $H_m\,u_m\, =\, -\lambda_m\,u_m $ on two sets $V_m \setminus V_{m-1}$ and $V_{m-1} \setminus V_{0}$.
\medskip

\noindent
Let $p \in V_{m} \setminus V_{m-1}$ and $q^{1},  q^{2}, \cdots , q^{N-1} \in \mathcal{U}_{p,\,m} $ be as before. Since these points satisfy the equation \eqref{ext_efunction}, from the definition of $ H_{m} u_{m}(p)$ as in equation \eqref{defn_of_Hm_u}, we obtain,
\begin{align*}
H_{m}\,u_{m}(p) \ \ &=\ \ -(N-1)\,u_{m}(p)\, +\, u_{m}(q^{1})\, +\, u_{m}(q^{2})\,+\, \cdots\, + \,u_{m}(q^{N-2})\,+\,u_{m-1}(q^{N-1}) \\
&=\ \ -(N-1)\, \frac{u_{m-1}(q^{N-1})}{ 1-\lambda_{m}} \, + \, (N-2)\,\frac{u_{m-1}(q^{N-1}) }{ 1-\lambda_{m}} \,+\,u_{m-1}(q^{N-1}) \\
&=\ \ -\frac{u_{m-1}(q^{N-1}) }{ 1-\lambda_{m}} \,+\,u_{m-1}(q^{N-1}) \\
&=\ \ u_{m}(p)\, \big[ -1\,+\,(1-\lambda_{m}) \big]\\
&=\ \ -\lambda_{m} \,u_{m}(p). 
\end{align*}
Now for $p \in V_{m-1}$, recall that there are $N-1$ points in $V_{m} \setminus V_{m-1} $ which are $m$-related to $p$ collected in the set $\mathcal{U}_{p,\,m}$. Thus, by equation \eqref{defn_of_Hm_u} we have,
\begin{align*}
H_{m}\,u_{m}\,(p)\ &=\ H_{m-1} u_{m-1}\,(p)\,+\, \Bigg[ -(N-1)\,u_{m-1}(p)\,+  \sum\limits_{q \in \mathcal{U}_{p,\,m}} u_{m}(q)  \Bigg]\\
&=\ -\lambda_{m-1}  u_{m}\,(p)\,+\,\Bigg[ -(N-1)\,u_{m}(p)\,+\, (N-1)\,\frac{u_{m}(p)}{1-\lambda_{m} }  \Bigg]\\
&=\  \Big[(-\lambda_{m-1}-N+1)\,(1-\lambda_{m} )\,+\,N-1  \Big]\, \frac{u_{m}(p)}{1-\lambda_{m} }.
\end{align*}
After substituting for $\lambda_{m-1}$ from equation (\ref{lambda}) and rearranging, we obtain,
\begin{align*}
H_{m}\,u_{m}\,(p)\ \ &=\ \ \frac{\Big[ -\lambda_{m} (N-\lambda_{m} )\,-\,(N-1)\,(1-\lambda_{m} )  \,+\,N-1  \Big]}{1-\lambda_{m}}\,u_{m}(p)\\
&=\ -\lambda_{m}\, u_{m}(p)
\end{align*}
Therefore $u_{m}$ is an eigenfunction of $H_{m}$ with eigenvalue $\lambda_{m}$.
\medskip

\noindent
The proof of the converse follows directly by reversing the arguments above. However, we include only a short proof for the readers' convenience. Suppose $u_{m}$ satisfies the eigenvalue equation $H_{m}\,u_{m}\, =\, -\lambda_{m}\,u_{m}$ on $V_{m}$. Then for any $q \in V_{m-1}$,
\[H_{m}\,u_{m}\,(q) \ = \ H_{m-1}\,u_{m-1}(q)\,+\,  \Bigg[ -(N-1)\,u_{m-1}(q)\,+  \sum\limits_{s\, \in\, \mathcal{U}_{q,\,m}} u_m(s)  \Bigg]\ = \ -\lambda_m\,u_m\,(q).  \] 
This implies,
\begin{align*}
H_{m-1}\,u_{m-1}(q)\ &= \ \Bigg[-\lambda_m\,+\,N\,-\,1\,-\,\frac{N-1}{1-\lambda_m}  \Bigg] \,u_{m-1}(q) \\
&=\ - \frac{\lambda_{m}(N-\lambda_{m})}{(1-\lambda_{m})} \,u_{m-1}(q)\\
&=\ -\lambda_{m-1}\,u_{m-1}(q).
\end{align*}
\end{proof}

\medskip

\section{Continuous extension of the eigenfunctions}
\label{cts ext}
Define a map $\phi_{\beta } : \mathbb{R} \longrightarrow \mathbb{R}$ as,
\begin{equation}
\label{phi}
\phi_{\beta }(x)= \frac{N+x}{2} \left[ 1 + \, \beta \sqrt{1- \frac{4x}{(N+x)^{2}}} \  \right], \ \ \text{for}\ \ \beta = \pm 1.
\end{equation}
For simplicity, we denote $\phi_{\beta}$ by $\phi_{+}$ or $\phi_{-}$, according to the value of $\beta = 1$ or $\beta = -1$ respectively. If we choose $\beta = -1$, then one can observe that $0$ is the only attracting fixed point of $\phi_{\beta}$. The eigenvalues of $H_{m-1}$ and $H_{m}$ that are related according to equation \eqref{lambda2} in the spectral decimation, satisfy $\lambda_{m} = \phi_{\beta_{m} } (\lambda_{m-1}) $, with $\beta_{m} = \pm 1$. In this paper, we study only the positive spectrum of the difference operators. For any $x > 0$, note that $\frac{4x}{(N+x)^{2}} < 1$, as $N>2$ and thus, all the eigenvalues  are real. Using the binomial expansion for $\sqrt{1-x}$, for $x \in (0, \, 1] $, we obtain,
\begin{equation}
\label{order}
\phi_{-}(x)\ \ = \ \ \frac{x}{N+x} \ +\, O\left(\frac{1}{N^{2}}\right).
\end{equation} 
For any $\lambda_{m} > 0$, if the value of $\beta_{m+1}$ is chosen to be $-1$, then we observe that $\lambda_{m+1} < 1$.  For, if not, then writing $\lambda_{m+1}$ according to the equation \eqref{lambda2}, we have,
\[ \lambda_{m+1}\ \ =\ \ \frac{(N+\lambda_{m}) - \sqrt{(N+\lambda_{m})^2- 4\lambda_{m}} }{2} \ \ > \ \ 1,\]
which implies,
\[ (N+\lambda_{1})\,-\,2 \ \ > \ \ \sqrt{(N+\lambda_{1})^2- 4\lambda_{1}}. \]
Squaring both the sides, we get,
\[ (N+\lambda_{m})^{2} - 4(N+\lambda_{m}) + 4 \ \ > \ \ (N+\lambda_{m})^{2} - 4 \lambda_{m}.\]
Simplifying, we arrive at $N < 1$, which is a contadiction.
\medskip

\noindent
Choose $\beta_{m} = - 1$ for all but finitely many $m \ge 1$. Throughout this paper, we denote the $m$-fold composition of $\phi_{-}$ with itself by,
\[ \phi_{-}^{m} \ \ := \ \ \underbrace{\phi_{-} \circ \cdots \circ \phi_{-} }_{m}   \]
 If $m_{0} := \min \left\lbrace m\, :\, \beta_{i} = -1, \ \forall \ i > m \right\rbrace  $, then $\lambda_{m} = \phi_{-}^{m-m_{0}} (\lambda_{m_{0}}) $ for all $m > m_{0}$. Observe that $\lambda_{m} \in (0,1)$ for all $m > m_{0}$ and the sequence $\{ \lambda_{m}\}_{m\, \ge\, m_{0}}$ is decreasing, with the order of convergence given by equation \eqref{order}. Therefore, we conclude that the limit $\lim\limits_{m\, \to\, \infty} N^{m+1} \lambda_{m}$ exists. 
\medskip

\noindent
Let us begin with an eigenfunction $u_{\eta}$ of $H_{\eta}$ with the corresponding eigenvalue $\lambda_{\eta}$, for some $\eta \ge 1$. The method of spectral decimation provides us a sequence of eigenfunctions $\{ u_{m}\}_{m\, \ge\, \eta}$ and corresponding eigenvalues $\{\lambda_{m} \}_{m\, \ge\, \eta}$. For $m < \eta $, denote the restriction of $u_{\eta}$ to $V_{m}$ by $u_{m}$. Define a function $u : \Sigma_{N}^{+} \longrightarrow \mathbb{R}$ as follows: Let $x \in \Sigma_{N}^{+}$. If $x \in V_{*}$, define $u(x) := u_{m}(x)$, where $m = \min \{ i \ge 0 \, : \, x \in V_{i}\} $. For $x \in \Sigma_{N}^{+} \setminus V_{*}$, consider a sequence of points $\{ p^{m_{k}} \}_{k\, \ge\, 0}$ converging to $x$, such that $p^{m_{k}} \in V_{m_{k}} \setminus V_{m_{k} - 1} $. Then define,
\begin{equation}\label{ext full}
u(x) := \lim\limits_{k\, \rightarrow\, \infty} u_{m_{k}} (p^{m_{k}}).
\end{equation} 
Note that this extension is independent of the choice of sequence in $V_{*}$ converging to $x$. 

\medskip

\noindent
Now, for any $q \in V_{\eta}$, we may consider all the points of $V_{*} \setminus V_{\eta}$ that are $\eta$-related to $q$, to be lying on a fibre, with $q$ being the base point of the fibre. If $q \in V_{\eta} \setminus V_{\eta-1}$, then the fibre corresponding to any point $p \in U_{q,\,\eta}$ coincides with the fibre corresponding to $q$. The value of the function $u$ along any of these fibres is determined by the value of $u_{\eta}$ at the base point and the sequence of eigenvalues $\{\lambda_{m} \}_{m\, \ge\, \eta}$.
\medskip

\noindent
In particular, for any $n_{0} > \eta$, choose a point $p = (p_{1}\,\cdots p_{\eta}\, \cdots\, p_{n_{0}}\,\dot{p}_{n_{0}+1})$ in $V_{n_{0}} \setminus V_{n_{0}-1}$, with $p_{n_{0}} \ne p_{n_{0}+1} $. The base point corresponding to $p$ is $ q = (p_{1}\, p_{2} \, \cdots \,p_{\eta}\, \dot{p}_{\eta+1}) \in V_{\eta}$. Let $\eta < n_{1} < n_{2} < \cdots < n_{d} = n_{0}$ be the only positions of letters (after $\eta^{th}$ position) in $p$ where for each $1 \le i \le d$, the consecutive letters $p_{n_{i}}$ and $p_{n_{i}+1}$ are distinct. 
\medskip

\noindent
Applying lemma \eqref{lem1} iteratively to each of the eigenfunctions $u_{n_{i}}$ for $1 \le i \le d$, we obtain the value of $u_{n_{0}}$ at the point $p$ as,
\begin{equation}
\label{eq4} 
u_{n_{0}}(p) \ \ = \ \ \frac{u(q)}{(1-\lambda_{n_{1}}) (1-\lambda_{n_{2}}) \cdots (1-\lambda_{n_{d}}) }.
\end{equation}
In particular, if all the consecutive letters from $\eta^{th}$ position to $(n_{0}+1)^{st}$ position of the point $p \in V_{n_{0}} \setminus V_{n_{0}-1}  $ differ from each other, then all the eigenvalues from $\lambda_{\eta + 1}$ to $\lambda_{n_{0}}$ contribute in determining the value of $u_{n_{0}}$ at $p$ as,
\[ u_{n_{0}}(p)\ \ = \ \ \frac{u(q)}{(1-\lambda_{\eta + 1}) (1-\lambda_{\eta + 2}) \cdots (1-\lambda_{n_0}) }. \]

\medskip

\noindent
For simplicity, let us assume that $\beta_{m} = -1$ for all $m > \eta$. Clearly, $\lambda_{m} \in (0,\,1)$ and hence $\frac{1}{1-\lambda_{m}} > 1$ for all $m > \eta$. Consider the sequence $\{a_{n} \}_{n\, >\, \eta}$ defined by,
\[ a_{n} \ \ := \ \ \frac{1}{(1-\lambda_{\eta + 1}) (1-\lambda_{\eta + 2}) \cdots (1-\lambda_{n})}. \] 
Convergence of this sequence $\{a_{n} \}_{n\, >\, \eta} $ guarantees the existence of the limit in (\ref{ext full}). Since $\Big|\frac{a_{n+1}}{a_{n}}\Big| \ = \ \frac{1}{(1-\lambda_{n+1})} \ > \ 1$, the sequence $\{a_{n} \}_{n\, >\, \eta} $ is increasing. Recall that, due to our particular choice of $\beta_{m}$, the limit $\lim\limits_{m\, \to\, \infty} N^{m+1} \lambda_{m}$ exists. Then there exist a positive real number $C $ and a natural number $M_{0} $ such that $\lambda_{n} \le \frac{C}{N^{n}}$ for all $n \ge M_{0}$. Therefore, we construct another increasing sequence $\{ b_{n}\}_{n\, >\, \eta}$ as,
\begin{equation*}
b_{n} := \frac{1}{(1-\lambda_{\eta + 1}) (1-\lambda_{\eta + 2}) \cdots (1-\lambda_{M_{0}})}\  \left[ \frac{N^{M_{0}+1} N^{M_{0}+2} \cdots N^{n}} {(N^{M_{0}+1}-C) (N^{M_{0}+2}-C) \cdots (N^{n}-C) }  \right] 
\end{equation*}
such that $|\,a_{n}\,| \le |\,b_{n}\,| $ for all $n \ge M_{0}$. Observe that this new sequence  $\{ b_{n}\}_{n > \eta}$ is contractive as, 
\begin{equation*}
\Bigg| \frac{b_{n+1}- b_{n}}{b_{n}- b_{n-1}}   \Bigg| =  \Bigg|\frac{N^{n}}{N^{n+1} - C}  \Bigg| < 1, \ \ \text{ whenever }\ \ n > \max \left\lbrace M_{0}, \frac{C}{(N-1) \log N} \right\rbrace
\end{equation*}
and hence converges to some limit, say $b$. Finally, the sequence $\{ a_n\}$ is also bounded above by $b$ and hence convergent.
\medskip

\noindent
Consider the sequence $c_{n_{0}} \ := \ \left\lbrace  \frac{1}{(1-\lambda_{n_{1}}) (1-\lambda_{n_{2}}) \cdots (1-\lambda_{n_{0}})} \right\rbrace_{n_{0}\, >\, \eta} $, obtained from the equation \eqref{eq4}. Since $\frac{1}{1-\lambda_{m}} > 1$ for all $m > \eta$, we obtain that $\{ c_{n}\}_{n\, >\, \eta}$ is an increasing sequence with $c_{n} < a_{n}$ for all $m > \eta$. Therefore $ \{ c_{n}\}_{n\, >\, \eta}$ converges and we conclude that any eigenfunction $u_{\eta}$ of $H_{\eta}$ can be extended to $\Sigma_{N}^{+}$ according to equation \eqref{ext full}. 
\medskip

\noindent
\begin{theorem}
The extension of an eigenfunction $u_{\eta}$ of $H_{\eta}$ with the corresponding eigenvalue $\lambda_{\eta}$ given by equation \eqref{ext full} is continuous on $\Sigma_{N}^{+} \setminus V_{*}$. 
\end{theorem}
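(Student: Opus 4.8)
The plan is to promote the fibre identity \eqref{eq4} to an explicit closed form for $u$ valid at every point of $\Sigma_N^+$, and then to exploit the ultrametric: two points that are close share a long common prefix, so that in their closed forms the ``head'' factors are literally identical and cancel, leaving only ``tail'' factors that converge to $1$ at a rate governed by the bound $\lambda_n \le C/N^n$.

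\emph{Step 1: a uniform closed form.} Fix $x \in \Sigma_N^+ \setminus V_*$ and put $q_x := (x_1\, \cdots\, x_\eta\, \dot{x}_{\eta+1}) \in V_\eta$. Since $x$ is not eventually constant there are infinitely many $n > \eta$ with $x_n \ne x_{n+1}$; enumerate them as $\eta < N_1 < N_2 < \cdots$ and set $p^{(k)} := (x_1\, \cdots\, x_{N_k}\, \dot{x}_{N_k+1}) \in V_{N_k}\setminus V_{N_k-1}$, so that $p^{(k)} \to x$. By \eqref{eq4}, $u_{N_k}(p^{(k)}) = u(q_x)\big/\prod_{j=1}^{k}(1-\lambda_{N_j})$, and letting $k \to \infty$, using that $\prod_{n > \eta}(1-\lambda_n)$ converges to a nonzero limit (since $\sum_n \lambda_n < \infty$), we obtain
\[
u(x) \;=\; \frac{u(q_x)}{\prod_{n > \eta,\; x_n \ne x_{n+1}}(1-\lambda_n)}.
\]
The same identity, now with a finite product, holds for $y \in V_* \setminus V_\eta$ directly from \eqref{eq4}; and for $y \in V_{m'}$ with $m' \le \eta$ one checks that $(y_1\, \cdots\, y_\eta\, \dot{y}_{\eta+1}) = y$ and the product is empty, so the displayed formula holds for \emph{every} $y \in \Sigma_N^+$ with $q_y := (y_1\, \cdots\, y_\eta\, \dot{y}_{\eta+1})$.

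\emph{Step 2: cancellation and estimate.} Fix $x \notin V_*$ and let $y \in \Sigma_N^+$ satisfy $d(x,y) \le 2^{-(L+1)}$ with $L \ge \eta+1$, that is, $x_i = y_i$ for $1 \le i \le L$. Then $q_y = q_x =: q$, and for every $n$ with $\eta < n \le L-1$ we have $x_n = y_n$ and $x_{n+1} = y_{n+1}$, whence
\[
\{\, n : \eta < n \le L-1,\ x_n \ne x_{n+1}\,\} \;=\; \{\, n : \eta < n \le L-1,\ y_n \ne y_{n+1}\,\}.
\]
Consequently the two products of Step~1 factor as $A\cdot T_x$ and $A\cdot T_y$ with the \emph{same} finite, nonzero head $A := \prod_{\eta < n \le L-1,\ x_n \ne x_{n+1}}(1-\lambda_n)$, the tails $T_x, T_y$ being the (possibly empty, finite, or infinite) products over the change positions $\ge L$ of $x$ and of $y$ respectively. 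Hence
\[
|u(x) - u(y)| \;=\; \frac{|u(q)|}{A}\left|\frac{1}{T_x} - \frac{1}{T_y}\right| \;\le\; |u(q)|\,P\left(\left|\frac{1}{T_x} - 1\right| + \left|\frac{1}{T_y} - 1\right|\right),
\]
where $P := \prod_{n > \eta}(1-\lambda_n)^{-1} < \infty$ bounds $A^{-1}$. For $L$ large enough, every factor $(1-\lambda_n)$ with $n \ge L$ lies in $(0,1)$ and satisfies $1-\lambda_n \ge 1 - C/N^n$, so both $T_x$ and $T_y$ lie in $\big(\prod_{n \ge L}(1 - C/N^n),\,1\big]$; thus $|T_x^{-1} - 1|$ and $|T_y^{-1} - 1|$ are at most $\prod_{n \ge L}(1 - C/N^n)^{-1} - 1$, a quantity depending only on $L$ and tending to $0$ as $L \to \infty$. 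As $d(x,y) \to 0$ we may take $L \to \infty$, and since $q$ (hence $|u(q)|$) is fixed once $x$ is fixed, the right-hand side tends to $0$. This is continuity of $u$ at $x$, and $x \in \Sigma_N^+ \setminus V_*$ was arbitrary.

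I expect the main obstacle to be Step~1: packaging \eqref{eq4} into a single identity that simultaneously covers points that are not eventually constant, eventually-constant points of $V_* \setminus V_\eta$, and points of $V_{m'}$ with $m' \le \eta$ (where $q_y$ collapses to $y$), together with verifying that the infinite product genuinely converges to a nonzero limit. Both of these rest on the growth estimate $\lambda_n \le C/N^n$ recorded just before the theorem. Once the uniform formula is in hand, the cancellation of heads is forced by the ultrametric structure of $d$, and the remaining estimate is routine bookkeeping with the convergent products already constructed via the sequences $\{a_n\}$, $\{b_n\}$, $\{c_n\}$.
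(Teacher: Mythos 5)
Your proposal is correct and follows essentially the same route as the paper: both arguments exploit that two points sharing a long common prefix have identical ``change positions'' up to that prefix, so the corresponding head factors $\prod(1-\lambda_{n_i})$ cancel, leaving a difference of tail products controlled by the bound $\lambda_n \le C/N^n$ (the paper's sequences $a_n, b_n, c_n$ play the role of your infinite-product convergence). Your Step 2 actually makes the final tail estimate more explicit than the paper does, but the underlying mechanism is the same.
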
 
\begin{proof}
Let $u$ be the extension of a given eigenfunction $u_{\eta}$ to the entire space $\Sigma_{N}^{+}$ as given by equation (\ref{ext full}).
Let us take two points $x, y \in \Sigma_{N}^{+} \setminus V_{*}$ such that they agree on a large number of initial places, say $M_{0} > \eta$. We represent these points by $x = (x_{1}\, \cdots\, x_{\eta}\,\cdots\, x_{M_{0}} \,x_{M_{0}+1}\, \cdots )$ and $ y = (x_{1}\,\cdots \, x_{\eta} \,\cdots \, x_{M_{0}} \,y_{M_{0}+1}\,y_{M_{0}+2} \,\cdots )$. Clearly $x$ and $y$ belong to the same fibre starting at a base point $q = (x_{1}\, \cdots \, x_{\eta}\, \dot{x}_{\eta+1} ) \in V_{\eta}$. 
\medskip 

\noindent 
Construct an increasing sequence $n_{1}, n_{2} , \cdots \in \mathbb{N}$ as follows: Let $n_{1} > \eta $ be the first instance after $\eta$ such that $x_{n_{1}} \ne x_{n_{1} + 1}$. Similarly $n_{2} > n_{1}$ to be the first instance afrer $n_{1}$ for which $x_{n_{2}} \ne x_{n_{2}+1}$ and so on. This sequence includes all the positions of letters in $x$ which are different from their successive letter. Such a sequence exists because of the choice of $x \notin V_{*}$. Recall that $V_{*}$ consists of all the eventually constant sequences. Corresponding to each of these $n_{i}$, consider a point $p^{n_{i}} = (x_{1}\, x_{2}\, \cdots\, x_{n_{i}}\, \dot{x}_{n_{i}+1}) \in V_{n_{i}} \setminus V_{n_{i}-1} $ which naturally lies on the same fibre starting at $q$. By following the extension algorithm of the decimation method, extend $u_{\eta}$ to $V_{n_{i}} \setminus V_{n_{i}-1} $ as,
\begin{equation*}
u(p^{n_{i}}) \ \ = \ \ u_{n_{i}}(p^{n_{i}}) \ \ = \ \ \frac{u(q)}{(1-\lambda_{n_{1}}) (1-\lambda_{n_{2}}) \cdots (1-\lambda_{n_{i}}) }.
\end{equation*}
Following the same process for a point $y$, obtain an increasing sequence of natural numbers $m_{0} < m_{1} < m_{2} < \cdots $ such that $y_{m_{j}} \ne y_{m_{j} + 1}$ and corresponding point $q^{m_{j}} \in V_{m_{j}} \setminus V_{m_{j}-1}$. The extension of $u_{\eta}$ at these points is written as,
\begin{equation*}
u(q^{m_{j}}) \ \ =\ \ u_{m_{j}}(p^{m_{j}}) \ \ = \ \ \frac{u(q)}{(1-\lambda_{m_{1}}) (1-\lambda_{m_{2}}) \cdots (1-\lambda_{m_{j}}) }.
\end{equation*}
We consider the difference
\begin{equation}
\big| u(x)-u(y)  \big| \ \ \le \ \ \big| u(x)-u(p^{n_{i}}) \big|\ + \ \big| u(p^{n_{i}}) - u(q^{m_{j}})  \big| \ + \ \big| u(y) - u(q^{m_{j}})   \big|.
\end{equation}
The first and last term on the right side tend to $0$ as $i,j \rightarrow \infty $.
Since $x$ and $y$ agree on first $M_{0}$ places, there exists a position $n_{d} \le M_{0}$ for some $d \in \mathbb{N}$ such that $n_{1} = m_{1},\, n_{2} = m_{2}, \, \cdots, \, n_{d} = m_{d}$ and $n_{d+1}, m_{d+1} > M_{0}$. Then for $i, j > d$, we have

\begin{align*}
\hspace{0.5cm} &\big|u(p^{n_i}) - u(q^{m_j})  \big| \\
&  \le  \frac{\max |u_{m_0}| }{ (1-\lambda_{n_1}) (1-\lambda_{n_2}) \cdots (1-\lambda_{n_d})} \ \  \Bigg| \frac{1}{(1-\lambda_{n_{d+1}}) \cdots (1-\lambda_{n_i}) } - \frac{1}{(1-\lambda_{m_{d+1}}) \cdots (1-\lambda_{m_j})} \Bigg| \\
&\le b\, \max |u_{m_0}|\  \Bigg| \frac{1}{(1-\lambda_{n_{d+1}}) \cdots (1-\lambda_{n_i}) } - \frac{1}{(1-\lambda_{m_{d+1}}) \cdots (1-\lambda_{m_j})} \Bigg|
\end{align*}
As $x$ and $y$ come closer, $M_{0} \rightarrow \infty$, then $d,\,i,\,j\, \rightarrow \,\infty$. Therefore, the term on the right side above vanishes, proving the almost everywhere continuity of the function $u$.
\end{proof}

\section{Forbidden eigenvalues}\label{forbidden ev}
Let $m \ge 1$. We say $\lambda_{m} \in \mathbb{R}$ and $u_{m} \in \ell(V_{m}) $ are the Dirichlet eigenvalue and the corresponding Dirichlet eigenfunction of $H_{m}$ respectively, if they satisfy
\begin{eqnarray*}
H_{m} u_{m} \ \ &=& \ \ -\lambda_{m} \, u_{m} \quad \text{on } \ V_{m}\setminus V_{0}, \quad \text{subject to,} \\
u_{m}|_{V_{0}} \ \ &=& \ \ 0. 
\end{eqnarray*}
In this section, we prove that the forbidden eigenvalues as defined in section \eqref{S decimation}, play a major role in determining the Dirichlet spectrum of any difference operator $H_{m}$. We denote the Dirichlet spectrum of $H_{m}$ by $\Lambda_{m} $ and the geometric multilicity or simply, the multilplicity of the eigenvalue $\lambda_{m}$ by $M_{m}(\lambda_{m})$. Define the set $\ell_{0}(V_{m}) :=\left\lbrace f:V_{m} \longrightarrow \mathbb{R}\ \ :\ \ f|_{V_{0}} = 0\right\rbrace$. 
\medskip 

\noindent 
Let $p \in V_{m} \setminus V_{m-1}$ and $q^{1},\, q^{2},\,\cdots,\, q^{N-1} \in \mathcal{U}_{p,\,m}$. If $u_{m}$ is an eigenfunction of $H_{m}$ with the eigenvalue $\lambda_{m}$, then recall from section \eqref{S decimation}, that these points satisfy the equations \eqref{eq1} and \eqref{eq2}. We restate the $N-1$ equations as,
\begin{eqnarray}
\label{system1}
-(N-1)\,u_{m}(p)\,+\, u_{m}(q^{1})\,+\, u_{m}(q^{2})\,+\, \cdots \,+\, u_{m}(q^{N-1}) \ \ &=& \ \ -\lambda_{m}\, u_{m}(p) \nonumber\\
u_{m}(p)\, - (N-1)\, u_{m}(q^{1})\,+\, u_{m}(q^{2}) \,+\, \cdots \,+\, u_{m}(q^{N-1}) \ \ &=& \ \ -\lambda_{m}\, u_{m}(q^{1}) \nonumber\\
&\vdots & \\ 
u_{m}(p)\,+\, u_{m}(q^{1})\,+\, \cdots \,- (N-1)\, u_{m}(q^{N-2})\,+\,u_{m}(q^{N-1})\ \   &=& \ \ -\lambda_{m\,} u_{m}(q^{N-2}). \nonumber
\end{eqnarray}
Adding all the equations in the above system, we get,
\begin{equation}
\label{eq6}
(N-2) \left[ u_{m}(p)\, + \sum\limits_{j = 1}^{N-2} u_{m}(q^{j})  \right] \,+\, (N-1)\,u_{m}(q^{N-1}) \ \ =\ \ \left[ N-1- \lambda_{m} \right]\,   \left[ u_{m}(p)\, + \sum\limits_{j = 1}^{N-2} u_{m}(q^{j})  \right].
\end{equation}

\begin{proposition}
For any $m \ge 1$, $0$ is not a Dirichlet eigenvalue for any $H_{m}$.
\end{proposition}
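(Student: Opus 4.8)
The plan is to argue by induction on $m$, reducing a hypothetical Dirichlet eigenfunction at level $m$ for the eigenvalue $0$ to one at level $m-1$. Suppose $m \ge 1$, suppose $0 \notin \Lambda_{m-1}$ (when $m = 1$ there is nothing to assume here, since $\ell_{0}(V_{0}) = \{0\}$), and suppose towards a contradiction that $u_{m} \in \ell_{0}(V_{m})$ is not identically zero and satisfies $H_{m} u_{m} = 0$ on $V_{m} \setminus V_{0}$.

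First I would show that $u_{m}$ is constant on every $m$-equivalence class. Fix $p \in V_{m} \setminus V_{m-1}$ with its immediate neighbours $q^{1}, \dots, q^{N-1} \in \mathcal{U}_{p,m}$, so that $\{p, q^{1}, \dots, q^{N-1}\}$ is exactly the class $[p_{1} \cdots p_{m}]|_{V_{m}}$. Putting $\lambda_{m} = 0$ in the system \eqref{system1} (equivalently in \eqref{eq1}--\eqref{eq2}) and subtracting the equation at $q^{i}$ from the equation at $p$ gives $u_{m}(p) = u_{m}(q^{i})$ for $1 \le i \le N-2$, whereupon \eqref{eq1} forces $u_{m}(q^{N-1}) = u_{m}(p)$ as well; this is precisely Lemma \ref{lem1} together with relation \eqref{ext_efunction} read at $\lambda_{m} = 0$, which is legitimate because that proof divides only by $1 - \lambda_{m}$ and $N - \lambda_{m}$. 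Hence $u_{m}$ takes a single value on each $m$-class. Since an $m$-class $\{(p_{1}\cdots p_{m}\dot{l}) : l \in S\}$ contains exactly one point of $V_{m-1}$ (the one with $l = p_{m}$) and $N-1$ points of $V_{m} \setminus V_{m-1}$, the restriction $u_{m-1} := u_{m}|_{V_{m-1}}$ determines $u_{m}$ entirely: on each class $u_{m}$ equals $u_{m-1}$ at that class's unique $V_{m-1}$-point.

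Next I would push the eigenvalue equation down one level. For $q \in V_{m-1} \setminus V_{0}$, the $N-1$ points of $\mathcal{U}_{q,m}$ all lie in the $m$-class of $q$, so $\sum_{s \in \mathcal{U}_{q,m}} u_{m}(s) = (N-1)\,u_{m}(q) = (N-1)\,u_{m-1}(q)$; substituting this into the second branch of \eqref{defn_of_Hm_u}, the bracketed correction term cancels and $0 = H_{m} u_{m}(q) = H_{m-1} u_{m-1}(q)$. Since also $u_{m-1}|_{V_{0}} = u_{m}|_{V_{0}} = 0$, the function $u_{m-1}$ is either zero or a Dirichlet eigenfunction of $H_{m-1}$ for the eigenvalue $0$ (consistent with \eqref{lambda}, which gives $\lambda_{m-1} = 0$ when $\lambda_{m} = 0$). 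For $m = 1$ the set $V_{m-1}\setminus V_{0}$ is empty and $u_{0} = u_{1}|_{V_{0}} = 0$ outright; for $m \ge 2$ the induction hypothesis forces $u_{m-1} = 0$. In either case $u_{m}|_{V_{m-1}} \equiv 0$, and since every $m$-class meets $V_{m-1}$ and $u_{m}$ is constant on it, we conclude $u_{m} \equiv 0$ on $V_{m}$, contradicting the choice of $u_{m}$.

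I do not anticipate a genuine obstacle: once $\lambda_{m} = 0$ is used, everything is elementary linear algebra on the finite sets $V_{m}$. The one point that needs care is the bookkeeping in the descent step --- checking that a point of $V_{m-1}$ has exactly $N-1$ of its $m$-neighbours inside $V_{m} \setminus V_{m-1}$, so that the extra bracket in \eqref{defn_of_Hm_u} cancels identically, and confirming that the eigenvalue induced at level $m-1$ is again $0$, so that the induction closes.
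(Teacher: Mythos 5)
Your proposal is correct and follows essentially the same route as the paper's proof: induction on $m$, using the linear system at $\lambda_{m}=0$ to show $u_{m}$ is constant on each $m$-equivalence class, then cancelling the correction term in the second branch of the definition of $H_{m}$ to deduce $H_{m-1}(u_{m}|_{V_{m-1}})=0$ and invoke the induction hypothesis. The only cosmetic difference is that you fold the base case $m=1$ into the general step, whereas the paper treats it separately.
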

\begin{proof}
For any $m \ge 1$, we prove that the only Dirichlet eigenfunction $u_{m} \in \ell_{0}(V_{m})$ corresponding to $\lambda_{m} = 0$, is the zero function.  Let us proceed by induction on $m$. For $m=1$, suppose that, $\lambda_{1} = 0$. Let $p, \, q^{1},\,\cdots,\, q^{N-2} \in V_{1} \setminus V_{0}$ and $q^{N-1} \in V_{0}$ as before. Thus $u_{1}(q^{N-1}) = 0$ and the equation \eqref{eq6} reduces to,
\[ u_{1}(p)\, + \sum\limits_{j\, =\, 1}^{N-2} u_{1}(q^{j}) \ \ = \ \ 0.  \]
Substituting this relation in each equation in the system \eqref{system1} above, we obtain
\[ u_{1}(p) \ \ = \ \ u_{1}(q^{1}) \ \ = \ \ \cdots \ \ = \ \ u_{1}(q^{N-2})\ \ = \ \ 0.  \]
This holds for any arbitrarily chosen $p \in V_{1}\setminus V_{0}$ and we obtain $u_{1} \equiv 0$ on $V_{1}$. Since the only eigenfunction corresponding $\lambda_{1} =0$ is the zero function, $0 \notin \Lambda_{1}$.\\

\noindent
We suppose that the statement holds for $m-1$, that is, if $\lambda_{m-1} = 0$ then $u_{m-1} \equiv 0$ on $V_{m-1}$. We prove the statement for $m$. For $\lambda_{m} = 0$, the system \eqref{system1} reduces to 
\begin{eqnarray*}
-(N-1)\,u_{m}(p)\,+\, u_{m}(q^{1})\,+\, u_{m}(q^{2})\,+\, \cdots \,+\, u_{m}(q^{N-1}) \ \ &=& \ \ 0\\
u_{m}(p)\, - (N-1)\, u_{m}(q^{1})\,+\, u_{m}(q^{2}) \,+\, \cdots \,+\, u_{m}(q^{N-1}) \ \ &=& \ \ 0\\
&\vdots & \\ 
u_{m}(p)\,+\, u_{m}(q^{1})\,+\, \cdots \,- (N-1)\, u_{m}(q^{N-2})\,+\,u_{m}(q^{N-1}) \ \  &=& \ \ 0.
\end{eqnarray*}
Solving this simultaneous system of linear equations we get
\begin{equation}
\label{eq7}
u_{m}(p) \ \ = \ \ u_{m}(q^{1}) \ \ = \ \ \cdots \ \ = \ \ u_{m}(q^{N-2})\ \ = \ \ u_{m}(q^{N-1})
\end{equation}
If the point $ p $ is chosen such that $q^{N-1} \in V_{0}$, then $u_{m}(q^{N-1}) = 0 $ and we obtain, 
 \[ u_{m}(p) \ \ = \ \ u_{m}(q^{1}) \ \ = \ \ \cdots \ \ = \ \ u_{m}(q^{N-2})\ \ = \ \ 0.  \]
Now if $q^{N-1} \in V_{m-1} \setminus V_{0}$, then using the definition of $H_{m}$ as in equation \eqref{defn_of_Hm_u} on the points in $V_{m-1}$, we write the eigenvalue equation for $q^{N-1}$ as,
\[ H_{m} u_{m} (q^{N-1}) \ \ = \ \ H_{m-1} u_{m}|_{V_{m-1}} (q^{N-1}) \, -(N-1)\,u_{m}(q^{N-1})\,+\, u_{m}(p) \,+ \sum\limits_{j\, =\, 1}^{N-2} u_{m}(q^{j})\ \ = \ \ 0. \]
Substituting the relation \eqref{eq7} in the above equation, we get
\[ H_{m-1} (u_{m}|_{V_{m-1}}) \ \ = \ \ 0 \]
By the induction hypothesis, $u_{m}|_{V_{m-1}} \equiv 0$, which implies, $u_{m} (q^{N-1}) = 0$. From equation \eqref{eq7} we conclude that $u_{m} \equiv 0$ is the only eigenfunction corrsponding to $\lambda_{m}=0$.
\end{proof}

\begin{proposition}
\label{1 & N Dev}
If $u_{m} \in \ell(V_{m})$ is an eigenfunction of $H_{m}$ corresponding to the Dirichlet eigenvalue $\lambda_{m}$, then 
\[u_{m}|_{V_{m-1}} \ \equiv \ 0 \quad \text{ if and only if } \quad \lambda_{m} \ = \ 1 \ \text{ or } \ \lambda_{m}\ =\  N. \]
\end{proposition}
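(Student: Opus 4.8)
The plan is to reduce both implications to one algebraic identity extracted from the eigenvalue equations on a single level-$m$ cluster sitting over a point of $V_{m-1}$. Fix $q\in V_{m-1}$ and let $C_q$ be its $m$-related class in $V_m$; this class has the form $C_q=\{q\}\cup\{s^{1},\ldots,s^{N-1}\}$, where $q$ is the unique member of $C_q$ lying in $V_{m-1}$ and $s^{1},\ldots,s^{N-1}\in V_m\setminus V_{m-1}$. Since $\mathcal{U}_{s^{i},m}=C_q\setminus\{s^{i}\}$ for each $i$, writing $H_m u_m(s^{i})=-\lambda_m u_m(s^{i})$ through the first case of \eqref{defn_of_Hm_u}, a one-line rearrangement (exactly as in the proof of Lemma \eqref{lem1}) yields
\[
u_m(q)+\sum_{j=1}^{N-1}u_m(s^{j})\ =\ (N-\lambda_m)\,u_m(s^{i}),\qquad 1\le i\le N-1.
\]
Everything else is bookkeeping: the classes $C_q$, $q\in V_{m-1}$, partition $V_m$, so each point of $V_m\setminus V_{m-1}$ is an $s^{i}$ for exactly one $q\in V_{m-1}$.

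For the implication $u_m|_{V_{m-1}}\equiv0\Rightarrow\lambda_m\in\{1,N\}$, assume $u_m|_{V_{m-1}}\equiv0$ and, towards a contradiction, that $\lambda_m\ne1,N$. For each $q\in V_{m-1}$ the identity above with $u_m(q)=0$ and $\lambda_m\ne N$ forces $u_m(s^{1})=\cdots=u_m(s^{N-1})=:w_q$, whereupon the identity collapses to $(\lambda_m-1)w_q=0$, and so $w_q=0$ because $\lambda_m\ne1$. Letting $q$ range over $V_{m-1}$, this kills $u_m$ on all of $V_m\setminus V_{m-1}$, hence $u_m\equiv0$, contradicting that $u_m$ is an eigenfunction. (When $\lambda_m\notin\{0,1,N\}$ one may instead quote Lemma \eqref{lem1} directly: \eqref{ext_efunction} gives $u_m(p)=u_m(q^{N-1})/(1-\lambda_m)=0$ on $V_m\setminus V_{m-1}$, and the preceding proposition disposes of $\lambda_m=0$.)

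For the converse, suppose $\lambda_m=1$ or $\lambda_m=N$. If $m=1$ there is nothing to show, since then $V_{m-1}=V_0$ and $u_1|_{V_0}=0$ by the Dirichlet condition; so let $m\ge2$ and fix $q\in V_{m-1}\setminus V_0$. If $\lambda_m=1$, then $1\ne N$ forces $u_m(s^{1})=\cdots=u_m(s^{N-1})=:w$, so the identity reads $u_m(q)+(N-1)w=(N-1)w$, giving $u_m(q)=0$. If $\lambda_m=N$, the identity gives $\sum_{j}u_m(s^{j})=-u_m(q)$; substituting this into the eigenvalue equation at $q$ — the second case of \eqref{defn_of_Hm_u}, valid because $q\in V_m\setminus V_0$ — yields $H_{m-1}(u_m|_{V_{m-1}})(q)=0$. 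As $q\in V_{m-1}\setminus V_0$ was arbitrary and $u_m|_{V_0}=0$, the restriction $v:=u_m|_{V_{m-1}}$ satisfies $H_{m-1}v=0$ on $V_{m-1}\setminus V_0$ together with $v|_{V_0}=0$; by the preceding proposition ($0$ is not a Dirichlet eigenvalue of $H_{m-1}$), $v\equiv0$. In all cases $u_m|_{V_{m-1}}\equiv0$.

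I expect the $\lambda_m=N$ branch of the converse to be the crux: in contrast with $\lambda_m=1$, the cluster identity does not by itself determine $u_m(q)$, so one has to feed it back through the self-similar recursion for $H_m$ on $V_{m-1}$ in \eqref{defn_of_Hm_u}, recognise $u_m|_{V_{m-1}}$ as a zero-eigenfunction of $H_{m-1}$, and invoke the earlier proposition ruling out the Dirichlet eigenvalue $0$. Minor care is also needed for the base case $m=1$, where the hypothesis and the Dirichlet boundary condition coincide, and for the verification that the clusters $C_q$ partition $V_m$ so that the pointwise conclusions above extend to all of $V_m\setminus V_{m-1}$.
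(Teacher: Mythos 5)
Your proof is correct and follows essentially the same route as the paper: the same cluster identity $u_m(q)+\sum_j u_m(s^j)=(N-\lambda_m)u_m(s^i)$ drives both directions, and the $\lambda_m=N$ branch is resolved exactly as in the paper by recognising $u_m|_{V_{m-1}}$ as a solution of $H_{m-1}v=0$ with $v|_{V_0}=0$ and invoking the preceding proposition. Your forward direction is phrased contrapositively (showing $u_m\equiv 0$ when $\lambda_m\notin\{1,N\}$) rather than by the paper's direct case split on whether the cluster sum vanishes, but this is a cosmetic reorganisation — if anything slightly cleaner, since it makes explicit where the nonvanishing of the eigenfunction is used.
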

\begin{proof}
The case $m=1$ is trivial. We prove the statement for any $m \ge 2$. Let $p,\,q^{1},\, q^{2},\,\cdots,\, q^{N-1} $ be as before. Suppose first that $u_{m}|_{V_{m-1}} \, \equiv \, 0 $. The equation \eqref{eq6} gives,
\[ (N-2) \left[ u_{m}(p)\, + \sum\limits_{j\, =\, 1}^{N-2} u_{m}(q^{j})  \right]  \ \ =\ \ \left[ N-1- \lambda_{m} \right]\,   \left[ u_{m}(p)\, + \sum\limits_{j\, =\, 1}^{N-2} u_{m}(q^{j})  \right]. \]
If $ \left[ u_{m}(p)\, + \sum\limits_{j\, =\, 1}^{N-2} u_{m}(q^{j})  \right] \ \ne \ 0 $, then cancelling this term on both the sides we get $\lambda_{m}\,=\,1$.\\
\noindent
And if $ \left[ u_{m}(p)\, + \sum\limits_{j\, =\, 1}^{N-2} u_{m}(q^{j})  \right] \ = \ 0 $, then substituting the relation in the first equation of the system \eqref{system1} we get $\lambda_{m}\,=\, N$.\\

\noindent
For the converse part let us first assume $\lambda_{m}\,=\,1$. Then, solving the system simultaneously we obtain the relation
\begin{equation}\label{eq08}
u_{m}(p) \ \ = \ \ u_{m}(q^{1}) \ \ = \ \ \cdots \ \ = \ \ u_{m}(q^{N-2}),
\end{equation}
which in turn gives $ u_{m}(q^{N-1})\, =\, 0$. Thus $u_{m}|_{V_{m-1}} \ \equiv \ 0$. 
\medskip

\noindent
Now, for $\lambda_{m} \,= \,N$, from the first equation in the system \eqref{system1} we get,
\begin{equation}
\label{eq8}
u_{m}(p)\,+\, u_{m}(q^{1})\,+\, u_{m}(q^{2})\,+\, \cdots \,+\, u_{m}(q^{N-1}) \ \ = \ \ 0.
\end{equation}
Observe that the eigenvalue equation on $V_{m-1}$ is written as,
\[H_{m-1} u_{m}|_{V_{m-1}} (q^{N-1}) \, -(N-1)\,u_{m}(q^{N-1})\,+\, u_{m}(p) \,+ \sum\limits_{j = 1}^{N-2} u_{m}(q^{j}) \ \ = \ \ -N\,u_{m}(q^{N-1}).   \] 
Using the relation \eqref{eq8} in above equation we obtain,
\begin{align*}
H_{m-1} u_{m-1}\ \ &=\ \ 0 \quad \text{ on } V_{m-1} \setminus V_{0},\\
u_{m}|_{V_{0}} \ \ &=\ \ 0.
\end{align*}
Therefore from the previous proposition, we conclude that $u_{m}|_{V_{m-1}}\ \equiv \ 0$. 
\end{proof}
\medskip

\noindent
It is evident from the above lemma that $1, \, N \in \Lambda_{m}$ for all $m \ge 1$. In fact, $\Lambda_{1} \, = \, \left\lbrace 1,\, N    \right\rbrace $.

\section{Dirichlet spectrum of $H_{m}$}
\label{D spectrum}
\noindent
We first verify that $1$ and $N$ are the only Dirichlet eigenvalues of $H_{1}$. Let us list the corrsponding eigenfunctions of $H_{1}$. Fix $p = (p_{1}\,\dot{p}_{2}) \in V_{1} \setminus V_{0}$ and $q^{1},\,\cdots ,\,q^{N-1}$ as before. When $\lambda_{1} = 1$, it follows from equation \eqref{eq08}, that the corresponding eigenfunction $u_{1}$ is constant on the equivalence class $ \left[ p_{1} \right]|_{V_{1} \setminus V_{0}} \ = \ \left[ p_{1} \right] \cap (V_{1} \setminus V_{0})$. Consider the characteristic function $\rchi_{[ p_{1} ]|_{V_{1} \setminus V_{0}}}\in \ell_{0}(V_{1})$ of the set $[ p_{1} ]|_{V_{1} \setminus V_{0}} $ as,
\[ \rchi_{[ p_{1} ]|_{V_{1} \setminus V_{0}}}\ \ =\ \  \begin{cases}
1\ \ \text{on} \ \ [ p_{1} ]|_{V_{1} \setminus V_{0}},\\
0\ \ \text{elsewhere.}
\end{cases}
 \]  
There are $N$ such independent functions corresponding to the equivalence classes in  $ V_{1}$. Thus $\left\lbrace  \rchi_{[ p_{1} ]|_{V_{1} \setminus V_{0}}} : \ p_{1} \in S \right\rbrace  $ forms the basis of the eigenspace for $\lambda_{1} = 1$ with $M_{1}(1) = N$. \\

\noindent
For  $\lambda_{1} = N $. define the functions $\mathcal{G}_{[p_{1}]|_{V_{1}},\, k} \in \ell_{0}(V_{1})$ with $ 1\le k \le N-2$, by
\[  \mathcal{G}_{[p_{1}]|_{V_{1}},\, k}\,(q)\ \ :=\ \ \begin{cases}
1\ \ &\text{if} \ \ q = p,\\
-1\ \ &\text{if} \ \ q = q^{k},\\
0\ \ &\text{elsewhere.}
\end{cases} \]
For a particular equivalence class $[p_{1}]|_{V_{1}} $, there are $N-2$ independent functions of the type above. It is now straightforward to verify that $ \left\lbrace \mathcal{G}_{[p_{1}]|_{V_{1}},\,k} \ : \ 1\le k \le N-2,\  p_{1} \in S \right\rbrace $ forms the basis of the eigenspace of the dimension $M_{1}(N)\, =\, N(N-2)$ for $\lambda_{1} = N$. Therefore $ \Lambda_{1} = \{1,\,N  \} $ is the complete Dirichlet spectrum of $H_{1}$, since the sum of the multiplicities of both the eigenvalues is,
\[ M_{1}(1)\,+\,M_{1}(N) \ \ = \ \ N + N(N-2) \ \ = \ \ N^{2}-N \ \ = \ \ \# (V_{1} \setminus V_{0}). \]
\medskip

\begin{proposition}\label{dimension of 1,N}
For $ m \ge 1$, the values $1,\, N $ are the Dirichlet eigenvalues of $H_{m}$ with the multiplicities given by $M_{m}(1) = N$ and $M_{m}(N) = N^{m}(N-2) $.
\end{proposition}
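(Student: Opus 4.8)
The plan is to prove Proposition~\ref{dimension of 1,N} by induction on $m$, using the spectral decimation machinery of Theorem~\ref{decimation} together with Proposition~\ref{1 & N Dev}. The base case $m=1$ is already established above, where $M_1(1)=N$ and $M_1(N)=N(N-2)$ via the explicit bases $\{\rchi_{[p_1]|_{V_1\setminus V_0}}\}$ and $\{\mathcal{G}_{[p_1]|_{V_1},k}\}$. Note that $N(N-2)\ne N^m(N-2)$ when $m=1$ only if $N^1\ne N$, so actually the base case for the formula $M_m(N)=N^m(N-2)$ reads $M_1(N)=N(N-2)$, which matches; the induction should be set up to advance from $m-1$ to $m$ for $m\ge 2$.

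First I would handle $\lambda_m=1$. Suppose $u_m$ is a Dirichlet eigenfunction of $H_m$ with eigenvalue $1$. By Proposition~\ref{1 & N Dev}, $u_m|_{V_{m-1}}\equiv 0$, and from equation~\eqref{eq08} the function is constant on each equivalence class $[p_1\cdots p_m]|_{V_m\setminus V_{m-1}}$ and vanishes on that part of $V_{m-1}$ which is $m$-related to it; but since $u_m$ vanishes on all of $V_{m-1}$, one checks the eigenvalue equation at a point $q^{N-1}\in V_{m-1}$ forces the common constant to be unconstrained only for the class whose base point sits in $V_0$-free fibres — I would instead argue directly that $u_m$ must be locally constant on $V_m\setminus V_{m-1}$ and supported on classes, and that the eigenvalue equation on $V_{m-1}$ together with $u_m|_{V_{m-1}}=0$ pins down the value on each block. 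The upshot I expect: the eigenspace for $\lambda_m=1$ is spanned by $N$ functions, one for each symbol $p_1\in S$ (the value being constant $1$ on all blocks beginning with $p_1$ and $0$ elsewhere), giving $M_m(1)=N$, exactly as for $m=1$. The key point is that the constraint coming from the $H_{m-1}$ part of $H_m$ at points of $V_{m-1}$ collapses the naive count of blocks down to the $N$ top-level symbols.

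Next, for $\lambda_m=N$: again by Proposition~\ref{1 & N Dev}, $u_m|_{V_{m-1}}\equiv 0$, so $u_m\in\ell_0(V_m)$ is supported on $V_m\setminus V_{m-1}$, and on each equivalence class $[p_1\cdots p_m]|_{V_m\setminus V_{m-1}}$ (which has $N-1$ points, one of which, $q^{N-1}$, lies in $V_{m-1}$ where $u_m=0$, leaving $N-2$ free points in $U_{p,m}$) equation~\eqref{eq8} gives the single linear relation $\sum_{j=1}^{N-2}u_m(q^j)=0$ — wait, more precisely $u_m(p)+\sum_{j=1}^{N-1}u_m(q^j)=0$ with $u_m(q^{N-1})=0$, hence one relation among the $N-1$ nonzero slots of each class. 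So each class contributes a space of dimension $(N-1)-1=N-2$, and since $V_m\setminus V_{m-1}$ is partitioned into $\#(V_{m-1})=N^m$ such classes (one per word of length $m$), I get $M_m(N)=N^m(N-2)$. I would then verify consistency with the converse direction of Proposition~\ref{1 & N Dev} to confirm these really are eigenfunctions, and close by a dimension count if desired.

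The main obstacle I anticipate is the $\lambda_m=1$ case: unlike $\lambda_m=N$, where the eigenfunctions are ``local'' (each class contributes independently), for $\lambda_m=1$ the vanishing of $u_m$ on \emph{all} of $V_{m-1}$ — not just on $V_0$ — is a genuinely global constraint, and I must show carefully that it reduces the $N^m$ blocks of $V_m\setminus V_{m-1}$ to only $N$ degrees of freedom. The cleanest route is probably to observe that a $\lambda_m=1$ eigenfunction, being constant $c_{p_1\cdots p_m}$ on each block, must satisfy (from the $H_{m-1}$-part of the eigenvalue equation at each $q\in V_{m-1}\setminus V_0$, where $u_m(q)=0$) a recursive system that forces all blocks sharing a common first coordinate $p_1$ to carry the same constant and blocks with different first coordinates to be independent; tracing this down the fibre structure described in Section~\ref{preliminaries} should yield exactly $M_m(1)=N$.
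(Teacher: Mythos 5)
Your treatment of $\lambda_{m}=N$ is essentially the paper's argument and is sound: Proposition \ref{1 & N Dev} forces $u_{m}|_{V_{m-1}}\equiv 0$, each $m$-equivalence class then carries the single linear relation \eqref{eq8} among its $N-1$ remaining values, the $N^{m}$ classes decouple, and the eigenvalue equations on $V_{m-1}\setminus V_{0}$ are automatically satisfied because the sum over each class vanishes. This gives $M_{m}(N)=N^{m}(N-2)$ exactly as in the paper.

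The $\lambda_{m}=1$ case, however, contains a genuine error. You propose that the eigenspace is spanned by $N$ functions, each equal to $1$ on \emph{all} blocks $[p_{1}p_{2}\cdots p_{m}]|_{V_{m}\setminus V_{m-1}}$ sharing a given first symbol $p_{1}$, the constraint from $V_{m-1}$ ``forcing all blocks with a common first coordinate to carry the same constant.'' Neither the functions nor the mechanism is correct. Since $u_{m}|_{V_{m-1}}\equiv 0$, and hence $H_{m-1}\bigl(u_{m}|_{V_{m-1}}\bigr)\equiv 0$, the eigenvalue equation \eqref{defn_of_Hm_u} at the distinguished neighbour $q^{N-1}=(p_{1}\cdots p_{m-1}\,\dot{p}_{m})$ of a block reduces to $\sum_{s\,\in\,\mathcal{U}_{q^{N-1},\,m}}u_{m}(s)=-\lambda_{m}\,u_{m}(q^{N-1})=0$; combined with \eqref{eq08}, which says the block carries a single constant $c$ on its $N-1$ points in $V_{m}\setminus V_{m-1}$, this yields $(N-1)\,c=0$, i.e.\ $c=0$, for \emph{every} block whose distinguished neighbour lies in $V_{m-1}\setminus V_{0}$ --- that is, for every non-constant word $(p_{1}\cdots p_{m})$. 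The only blocks that escape are the $N$ ``diagonal'' ones with $p_{1}=\cdots=p_{m}$, whose distinguished neighbour $(\dot{p}_{1})$ lies in $V_{0}$, where no equation is imposed. The correct basis therefore consists of the characteristic functions of the $N$ single blocks $[p_{1}\,p_{1}\cdots p_{1}]|_{V_{m}\setminus V_{m-1}}$, $p_{1}\in S$. Your proposed functions are not eigenfunctions: for $N=3$, $m=2$, the function equal to $1$ on all blocks beginning with $1$ gives $H_{2}u\bigl((1\dot{2})\bigr)=2\neq 0$. The count $M_{m}(1)=N$ you announce is numerically right, but only because the number of first symbols happens to equal the number of constant words of length $m$; the argument you sketch to reach it would not survive being written out.
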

\begin{proof}
We have already established that $1$ and $N$ are the Dirichlet eigenvalues of $H_{m}$ and the proposition is true for $m = 1$. For $m \ge 2$, consider the points $ p \in V_{m} \setminus V_{m-1} $ and $ q^{1},\, \cdots,\,  q^{N-1} \in \mathcal{U}_{p,\,m}$ as before.\\

\noindent
Let $\lambda_{m}=1$ and $u_{m}$ be the corresponding eigenfunction. By proposition \eqref{1 & N Dev}, we have $u_{m}(q^{N-1}) = 0$. Also by equation \eqref{eq08}, we have,
\[ u_m(p)\,=\, u_m(q^1)\,=\, u_m(q^2)\,=\cdots \, u_m(q^{N-2}). \]
When $q^{N-1} \in V_{0}$, then $p=(p_{1} \,p_{2}\, \cdots \,p_{m}\, \dot{p}_{m+1}  )$ satisfies $p_{1} = p_{2} = \cdots = p_{m} \ne p_{m+1}$. For each such $p$, similar to the case when $m=1$, define the functions $ \rchi_{\left\{[ p_{1} p_{2} \cdots p_{m} ]|_{V_{m} \setminus V_{m-1}} \right\} } \in \ell_{0}(V_{m}),\ p_{1} \in S $ as follows:
\[  \rchi_{\left\{[ p_{1} p_{2} \cdots p_{m} ]|_{V_{m} \setminus V_{m-1}} \right\} }\ \ :=\ \  \begin{cases}
1\ \ \text{on} \ \ \left[ p_{1} p_{2} \cdots p_{m} \right]|_{V_{m} \setminus V_{m-1}},\\
0\ \ \text{elsewhere.}
\end{cases}
 \] 
 There are $N$ functions of this kind as $p_{1}$ varies over $S$. These functions are independent and hence belong to the eigenspace for $\lambda_{m}=1$.

\medskip

\noindent
Now, if $ q^{N-1} \in V_{m-1} \setminus V_{0}$ then the eigenvalue equation at the point $q^{N-1}$ is,
\[ H_{m} u_{m} (q^{N-1})\ \ =\ \  -\,u_{m} (q^{N-1}),  \]
which can be written using the definition of $H_{m}$ on $V_{m-1}$ as,
\[ H_{m-1} u_{m}|_{V_{m-1}} (q^{N-1})\,+\, -(N-1)\,u_{m}(q^{N-1})\,+\, u_{m}(p)\,+\, + \sum\limits_{j\, =\, 1}^{N-2} u_{m}(q^{j})      \ = \  -\,u_{m}(q^{N-1}).   \] 
Since $u_{m}|_{V_{m-1}} \equiv 0$,  we have $u_{m} (q^{N-1}) = 0 $ and $ H_{m-1} u_{m} (q^{N-1}) = 0$. Using the relation in \eqref{eq08}, we obtain $u_{m}(p)\,+\, u_{m}(q^{1})\, + \cdots +\,u_{m}(q^{N-2})\, =\, (N-1)u_{m}(p) $. Substituting for these quantities in above equations we get
\[ (N-1)\,u_m(p) = 0 \ \implies u_m(p) = 0  \]
This case does not contribute to the eigenspace for $1$. Therefore the  basis of the eigenspace for $1$ is $\left\lbrace  \rchi_{\left\{[ p_{1} p_{2} \cdots p_{m} ]|_{V_{m} \setminus V_{m-1}} \right\} }\ : \ p_{1}=\cdots=p_{m} \in S \right\rbrace   $ and hence $M_{m}(1) = N$.\\
\medskip

\noindent
Let us now find the eigenspace for $\lambda_{m}=N$. If $u_{m}$ is the corresponding eigenfunction of $H_{m}$, then again by proposition \eqref{1 & N Dev}, we have $u_{m}(q^{N-1}) = 0$. Thus equation \eqref{eq8} is rewritten as, 
\[  u_{m}(p)\,+\, u_{m}(q^{1})\,+\, u_{m}(q^{2})\, + \cdots +\,u_{m}(q^{N-2})\ \ =\ \ 0. \]
Similar to the functions $\mathcal{G}_{[p_{1}]|_{V_{1}},\,k}$ defined earlier,  for $ 1 \le k \le N-2$ the functions $\mathcal{G}_{[p_{1}\,\cdots\,p_{m}]|_{V_{m}},\, k} \in \ell_{0}(V_{m}) $ defined as, 
\[\mathcal{G}_{[p_{1}\,\cdots\,p_{m}]|_{V_{m}},\,k}\,(q)\ \ :=\ \ \begin{cases}
1\ \ &\text{if} \ \ q = p,\\
-1\ \ &\text{if} \ \ q = q^{k},\\
0\ \ &\text{elsewhere.}
\end{cases} 
 \]
are independent and belong to the basis of the eigenspace of $ N$. Note that this choice of eigenfunctions is particular to the equivalence classes of the $m$-relation, and is not dependent on the individual points of $V_{m}$. The set 
\[ \left\lbrace  \mathcal{G}_{[p_{1}\,\cdots\,p_{m}]|_{V_{m}},\,k}\ : \ 1\le k \le N-2,\ p_{1},\cdots,p_{m} \in S \right\rbrace \]
forms the basis of the eigenspace for $ N$. As there are total $N^{m}$ equivalence classes in $V_{m}$ for the $m$-relation and $N-2$ eigenfunctions corresponding to each equaivalence class, $M_{m}(N) = N^{m}(N-2)$. 
\end{proof}

\bigskip 

\noindent
For any $m \ge 2$,  the values $1$ and $N$ are not the only Dirichlet eigenvalues of $H_{m}$. For instance, the spectral decimation gives rise to four more Dirichlet eigenvalues of $H_{2}$ from $\lambda_{1}= 1$ and $\lambda_{1}= N$ which are, $\lambda_{2} = \phi_{\pm} (1),\, \phi_{\pm} (N) $. Similarly the Dirichlet eigenvalues of $H_{3}$ other than $1$ and $N$ are precisely, $ \phi_{\pm} (1),\, \phi_{\pm} (N),\, \phi_{+}\phi_{+}(1),\, \phi_{-}\phi_{+}(1),\, \phi_{-}^{2}(1),\, \phi_{+}\phi_{-}(1),\,$ $ \phi_{+}\phi_{+}(N),\, \phi_{-}\phi_{+}(N),\, \phi_{-}^{2}(N),\, \phi_{+}\phi_{-}(N) $. \\

\noindent
Suppose that $\lambda_{m} \in \Lambda_{m}$ is obtained by the spectral decimation from $\lambda_{m-1} \in \Lambda_{m-1} $ as $\lambda_{m} = \phi_{+} (\lambda_{m-1}) $ or $\phi_{-} (\lambda_{m-1}) $. If $u_{m} \in \ell_{0}(V_{m})$ and $u_{m-1} \in \ell_{0}(V_{m-1})$ are the corresponding eigenfunctions, then it is clear that $u_{m}$ is the extension of $u_{m-1}$ defined in terms of $u_{m-1}$ as in \eqref{ext to V_m}. Thus eigenspace of $\lambda_{m}$ is completely determined by the eigenspace of $\lambda_{m-1}$. Clearly, the multiplicities of $\lambda_{m}$ and $\lambda_{m-1}$ are the same. 
\medskip 

\noindent 
Also observe the following important fact.  Under the Dirichlet boundary conditions, the eigenfunctions take the value zero on the boundary $V_{0}$. Therefore the grand total of the multiplicities of the Dirichlet eigenvalues of $H_{m}$ should be $N^{m+1}-N$, which is the cardinality of the set $V_{m} \setminus V_{0}$.

\begin{theorem}
For $m \ge 2,$ the complete Dirichlet spectrum of $H_{m}$ is given by
\[ \Lambda_{m} \ = \ \left\lbrace 1,\, N,\, \phi_{+}(\lambda_{m-1}),\, \phi_{-}(\lambda_{m-1})\ :\ \lambda_{m-1} \in \Lambda_{m-1}  \right\rbrace.  \]
\end{theorem}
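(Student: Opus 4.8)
The plan is to prove the claimed description of $\Lambda_m$ by a dimension-counting argument that leverages the spectral decimation theorem (Theorem \ref{decimation}) together with the multiplicity computations already in hand. First I would establish the two inclusions separately. The inclusion $\supseteq$ is essentially free: Proposition \ref{1 & N Dev} shows $1, N \in \Lambda_m$, and for any $\lambda_{m-1} \in \Lambda_{m-1}$ with Dirichlet eigenfunction $u_{m-1} \in \ell_0(V_{m-1})$, the forward direction of Theorem \ref{decimation} produces eigenfunctions $u_m$ of $H_m$ with eigenvalues $\phi_{\pm}(\lambda_{m-1})$; since $u_m$ is the extension \eqref{ext_efunction} of $u_{m-1}$ and $u_{m-1}|_{V_0} = 0$, the extension still vanishes on $V_0$ (the boundary points lie in $V_{m-1}$ and their values are untouched), so $\phi_{\pm}(\lambda_{m-1}) \in \Lambda_m$.

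For the reverse inclusion $\subseteq$, I would argue that any $\lambda_m \in \Lambda_m$ not equal to $1$ or $N$ must arise by decimation from some $\lambda_{m-1} \in \Lambda_{m-1}$. Take a Dirichlet eigenfunction $u_m \in \ell_0(V_m)$. By Proposition \ref{1 & N Dev}, since $\lambda_m \neq 1, N$, the restriction $u_{m-1} := u_m|_{V_{m-1}}$ is \emph{not} identically zero. One checks $\lambda_m \neq 0$ (since $0 \notin \Lambda_m$ by the first proposition of Section \ref{forbidden ev}), and $\lambda_m \neq 1$ already; so the converse part of Theorem \ref{decimation} applies with $\lambda_{m-1} = \lambda_m(N-\lambda_m)/(1-\lambda_m)$, showing $u_{m-1}$ is an eigenfunction of $H_{m-1}$ with eigenvalue $\lambda_{m-1}$. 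Because $u_{m-1}$ inherits $u_{m-1}|_{V_0} = 0$, we get $\lambda_{m-1} \in \Lambda_{m-1}$, and $\lambda_m = \phi_{\beta}(\lambda_{m-1})$ for $\beta = \pm 1$ by \eqref{lambda2}. This gives $\Lambda_m \subseteq \{1, N\} \cup \{\phi_{\pm}(\lambda_{m-1}) : \lambda_{m-1} \in \Lambda_{m-1}\}$.

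The remaining point, and the one requiring the most care, is to confirm the set on the right-hand side is exactly right — in particular that we are not over-counting and that the total multiplicity adds up to $\#(V_m \setminus V_0) = N^{m+1} - N$, as flagged in the paragraph preceding the theorem. Here I would invoke two facts already noted: by Proposition \ref{dimension of 1,N}, $M_m(1) = N$ and $M_m(N) = N^m(N-2)$; and by the observation immediately before the theorem, if $\lambda_m = \phi_{\pm}(\lambda_{m-1})$ with $\lambda_{m-1} \notin \{1,N\}$ then $M_m(\lambda_m) = M_{m-1}(\lambda_{m-1})$, since the eigenspace of $\lambda_m$ is precisely the image under the extension map \eqref{ext to V_m} of the eigenspace of $\lambda_{m-1}$. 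Summing over $\Lambda_{m-1} = \{1, N\} \cup (\text{decimated values})$ and using the inductive hypothesis $\sum_{\lambda_{m-1} \in \Lambda_{m-1}} M_{m-1}(\lambda_{m-1}) = N^m - N$, together with the fact that each $\lambda_{m-1}$ contributes two new values $\phi_{+}(\lambda_{m-1}) \neq \phi_{-}(\lambda_{m-1})$ (they are distinct because the discriminant $(N+\lambda_{m-1})^2 - 4\lambda_{m-1}$ is strictly positive for $\lambda_{m-1} > 0$), one gets the count
\[
M_m(1) + M_m(N) + 2\!\!\sum_{\lambda_{m-1} \in \Lambda_{m-1}}\!\! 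M_{m-1}(\lambda_{m-1}) \ = \ N + N^m(N-2) + 2(N^m - N),
\]
which simplifies to $N^{m+1} - N$. Since the two inclusions already pin down $\Lambda_m$ as a set, and this count shows the multiplicities exhaust the full space $\ell_0(V_m)$, there is no room for any further eigenvalue, completing the induction.

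The main obstacle I anticipate is the bookkeeping around possible coincidences: one must be sure that $\phi_+(\lambda_{m-1})$ and $\phi_-(\lambda'_{m-1})$ produced from different $\lambda_{m-1}, \lambda'_{m-1} \in \Lambda_{m-1}$ do not collide (so that multiplicities genuinely add rather than merge), and that none of the decimated values accidentally equals $1$ or $N$. Both can be handled by analyzing the map $\phi_\beta$ — it is injective on the relevant range, and $\phi_\beta(x) \in \{1, N\}$ would force $x$ to be a root of $\lambda(N-\lambda)/(1-\lambda) \in \{0, \ldots\}$, contradicting $x \in \Lambda_{m-1}$ (note $\phi_\beta(\lambda_{m-1}) = 1$ is impossible since $\lambda_{m-1} \neq 0$, and the preimage of $N$ under the decimation relation \eqref{lambda} is only $0$). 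Once these injectivity and non-collision facts are recorded, the dimension count closes the argument cleanly.
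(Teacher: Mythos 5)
Your proposal is correct, and its backbone — induction on $m$ with the multiplicity count $M_m(1)+M_m(N)+2\sum_{\lambda_{m-1}\in\Lambda_{m-1}}M_{m-1}(\lambda_{m-1}) = N + N^m(N-2) + 2(N^m-N) = N^{m+1}-N = \#(V_m\setminus V_0)$ — is exactly the paper's argument. You go beyond the paper in two useful ways. First, you prove the inclusion $\Lambda_m \subseteq \{1,N\}\cup\{\phi_{\pm}(\lambda_{m-1}): \lambda_{m-1}\in\Lambda_{m-1}\}$ directly from the converse half of Theorem \ref{decimation}: if $\lambda_m\notin\{1,N\}$ then by Proposition \ref{1 & N Dev} the restriction $u_m|_{V_{m-1}}$ is not identically zero, and $\lambda_{m-1}=\lambda_m(N-\lambda_m)/(1-\lambda_m)\neq 0$ because $\lambda_m\neq 0,N$, so the restriction is a Dirichlet eigenfunction of $H_{m-1}$. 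The paper never makes this argument; it deduces completeness purely from the dimension count, so your route yields the set identity independently of the counting and uses the count only to confirm that the multiplicities exhaust $\ell_0(V_m)$. Second, you explicitly address the distinctness issues that the paper's count tacitly assumes: that $\phi_+(\lambda_{m-1})\neq\phi_-(\lambda_{m-1})$ (positive discriminant), that no decimated value equals $1$ or $N$ (the preimage of $1$ under \eqref{lambda} is empty and that of $N$ is $\{0\}$, and $0\notin\Lambda_{m-1}$), and that $\phi_{\beta}(\lambda)=\phi_{\beta'}(\lambda')$ forces $\lambda=\lambda'$ since both must equal $\mu(N-\mu)/(1-\mu)$ for the common value $\mu$. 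Without these observations the additivity of multiplicities in the count is not justified, so your extra care closes a gap the paper leaves implicit. The only loose phrasing is your appeal to "injectivity of $\phi_\beta$ on the relevant range"; the clean statement is the one just given, that any $\mu\neq 1$ determines its decimation preimage uniquely via \eqref{lambda}.
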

\begin{proof}
We rely on the multiplicities count to determine the complete spectrum. The inclusion 
\[ \left\lbrace 1,\, N,\, \phi_{+}(\lambda_{m-1}),\, \phi_{-}(\lambda_{m-1})\ :\ \lambda_{m-1} \in \Lambda_{m-1}  \right\rbrace \ \subset  \ \Lambda_{m} \]
is trivial. Let us now proceed by induction on $m$. Let $m=2$. By the proposition \eqref{dimension of 1,N}, we know that
\[ M_{2}(1) \ =\  N\ \ \text{and }\ \ M_{2}(N)\  =\  N^{2}(N-2).  \]
Since
\begin{eqnarray*}
M_{2}(\phi_{+}(1)) \ \ &= \ \ M_{2}(\phi_{-}(1)) \ \ &= \ \ M_{1}(1) \\
M_{2}(\phi_{+}(N)) \ \ &= \ \ M_{2}(\phi_{-}(N)) \ \ &= \ \ M_{1}(N),
\end{eqnarray*}
the grand total of the multiplicities of these Dirichlet eigenvalues is,
\[ M_{2}(1)\, + \,M_{2}(N)\, +\, 2\,M_{1}(1) \, + \, 2\, M_{1}(N) \ = \ N^{3}-N \ = \ \# (V_{2}\setminus V_{0} ) \]
as expected. Thus $\left\lbrace 1,\, N,\, \phi_{+}(\lambda_{1}),\, \phi_{-}(\lambda_{1})\ :\ \lambda_{1} \in \Lambda_{1}  \right\rbrace $ is the complete spectrum of $H_{2}$. \\

\noindent 
Assuming the statement holds for $m-1$, we prove for $m$. The grand total of the multiplicities of $\lambda_{m-1} \in \Lambda_{m-1}$ is,
\[ \sum\limits_{\lambda_{m-1}\, \in\, \Lambda_{m-1} } M_{m-1}(\lambda_{m-1}) \ = \ N^{m} - N. \]
Let us now count the multiplicities of the already known Dirichlet eigenvalues of $H_{m}$. The total of the multiplicities of the eigenvalues in the set $\left\lbrace 1,\, N,\, \phi_{+}(\lambda_{m-1}),\, \phi_{-}(\lambda_{m-1})\, :\, \lambda_{m-1} \in \Lambda_{m-1}  \right\rbrace  $ is,
\[ M_{m}(1) \,+\, M_{m}(N) \,+ \sum\limits_{\lambda_{m-1}\, \in\, \Lambda_{m-1}} M_{m}\left( \phi_{+}(\lambda_{m-1}) \right) \,+ \sum\limits_{\lambda_{m-1}\, \in\, \Lambda_{m-1}} M_{m}\left( \phi_{-}(\lambda_{m-1}) \right) \]
Using proposition \ref{dimension of 1,N} and the fact that $ M_{m}\left( \phi_{+}(\lambda_{m-1}) \right) \ = \ M_{m-1}(\lambda_{m-1})$, this total is,
\begin{align*}
N\,+\, N^{m}\,(N-2)\,+\, 2\sum\limits_{\lambda_{m-1}\, \in\, \Lambda_{m-1} } M_{m-1}(\lambda_{m-1}) \ &=\ N\,+\, N^{m}\,(N-2) \,+\, 2\left( N^{m}\,- \,N\right) \\
&= \ N^{m+1} \,-\, N \ \ = \ \ \# (V_{m} \setminus V_{0}).
\end{align*}
Therefore $ \Lambda_{m} \ = \ \left\lbrace 1,\, N,\, \phi_{+}(\lambda_{m-1}),\, \phi_{-}(\lambda_{m-1})\ :\ \lambda_{m-1} \in \Lambda_{m-1}  \right\rbrace $ is the complete spectrum of $H_{m}$.
\end{proof}

\section{Dirichlet eigenvalues of $\Delta$}
\label{main theorem}
Before proceeding to the main theorem of the paper, we bring the readers' attention to the fact, that although the following theorem determines the Dirichlet eigenvalues of the Laplacian $\Delta$, it does not guarantee that these eigenvalues constitute the complete Dirichlet spectrum. The question that still remains open, as was mentioned in the passing in the introduction, is the following. 
\medskip 

\noindent 
Suppose $\lambda$ is a Dirichlet eigenvalue and $u$ is the corresponding eigenfunction of $\Delta$. Then, is it possible to find a threshold $m_{0} \in \mathbb{Z}_{+}$ and sequences $\{ \lambda_{m} \}_{m\, \ge\, m_{0}},\ \{ u_{m} \}_{m\, \ge\, m_{0}}$ and $\{ H_{m} \}_{m\, \ge\, m_{0}}$ such that 
\begin{enumerate} 
\item $\lambda = \lim\limits_{m\, \to\, \infty} N^{m + 1} \lambda_{m}$; 
\item $u_{m} = u|_{V_{m}}$; 
\item $\lambda_{m}$ is an eigenvalue of $H_{m}$ with corresponding eigenfunction $u_{m}$; 
\item the sequence of eigenvalues and eigenfunctions namely $\{ \lambda_{m} \}_{m\, \ge\, m_{0}}$ and $\{ u_{m} \}_{m\, \ge\, m_{0}}$ satisfy the iterative relations in \eqref{lambda2} and \eqref{ext_efunction} respectively. 
\end{enumerate} 

\noindent
We conclude the paper with the following theorem that gives a Dirichlet eigenvalue and its corresponding eigenfunction of the Laplacian.
\begin{theorem}
Let $\lambda_{m_{0}}$ be a Dirichlet eigenvalue and $u_{m_{0}} \in \ell (V_{m_{0}})$ be the corresponding eigenfunction of $H_{m_{0}}$, for some $m_{0} \ge 1$. For $m \ge m_{0}$, suppose $\lambda_{m} := \phi_{-}^{m-m_{0}} (\lambda_{m_{0}})$ is a Dirichlet eigenvalue of $H_{m}$ with the corresponding eigenfunction $u_{m} \in \ell(V_{m})$, obtained by the spectral decimation method, as given by equations \eqref{lambda2} and \eqref{ext_efunction}, respectively. Then 
\[ \lambda\ \ :=\ \ \lim\limits_{m\, \to\, \infty} N^{m+1} \lambda_{m} \] 
is a Dirichlet eigenvalue of $\Delta$ with corresponding eigenfunction $u$ as defined in equation \eqref{ext full}.
\end{theorem}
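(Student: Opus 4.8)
The plan is to verify the limit function $u$ satisfies the pointwise Laplacian equation $\Delta u(x) = -\lambda u(x)$ on $\Sigma_N^+ \setminus V_0$, using the characterization \eqref{pointwise laplacian}, namely $\Delta u(x) = \lim_{m\to\infty} N^{m+1} H_m u(p^m)$ for a sequence $p^m \in V_m \setminus V_{m-1}$ converging to $x$. Fix $x \in \Sigma_N^+ \setminus V_0$ and choose the natural approximating sequence $p^m = (x_1 \cdots x_m \dot l) \in V_m \setminus V_{m-1}$. The first step is to relate $H_m u_m(p^m)$ to $u_m(p^m)$ itself: since $\lambda_m$ is an eigenvalue of $H_m$ with eigenfunction $u_m$, and $p^m \in V_m \setminus V_{m-1} \subseteq V_m \setminus V_0$, we have directly $H_m u_m(p^m) = -\lambda_m u_m(p^m)$. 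Therefore $N^{m+1} H_m u(p^m) = -N^{m+1}\lambda_m \cdot u_m(p^m)$, and it remains to show that the right-hand side converges to $-\lambda u(x)$.

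The two ingredients are then: (i) $N^{m+1}\lambda_m \to \lambda$, which is exactly the hypothesis that the limit defining $\lambda$ exists (and this was already justified in section \eqref{cts ext} via the estimate \eqref{order}, since $\beta_m = -1$ eventually forces $\lambda_m \in (0,1)$ decreasing with $\lambda_m = \frac{\lambda_{m-1}}{N+\lambda_{m-1}} + O(N^{-2})$); and (ii) $u_m(p^m) \to u(x)$. For (ii), if $x \notin V_*$ this is essentially the definition \eqref{ext full} of $u(x)$ together with the fact, established in section \eqref{cts ext}, that the limit is independent of the chosen approximating sequence in $V_*$; one must only check that the particular sequence $p^m$ with $p^m \in V_m \setminus V_{m-1}$ is admissible, which it is whenever $x$ has infinitely many coordinate changes, i.e. $x \notin V_*$. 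If $x \in V_* \setminus V_0$, then $x = (x_1 \cdots x_k \dot{x}_{k+1})$ is eventually constant, $x \in V_k$, and $u(x) = u_k(x)$ by definition; here the spectral decimation extension formula \eqref{eq4} must be applied to see that $u_m(p^m) \to u_k(x)$ as the extra factors $\tfrac{1}{1-\lambda_{n_i}}$ beyond level $k$ either do not appear (if $x$ is already constant from position $k$ on) or contribute a convergent product, so that the limit still equals $u(x)$ — this needs the convergence of $\{c_n\}$ shown at the end of section \eqref{cts ext}.

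Combining (i) and (ii), $\Delta u(x) = -\lambda u(x)$ for all $x \in \Sigma_N^+ \setminus V_0$. One then records that $u$ is continuous (it lies in $\mathcal{C}(\Sigma_N^+)$ by the continuity theorem of section \eqref{cts ext}, at least off $V_*$, and the values on the countable set $V_*$ are consistent limits), that $u|_{V_0} = 0$ since each $u_{m_0}$ — and hence every $u_m$ by the decimation extension, which preserves the values on $V_0$ — is a Dirichlet eigenfunction vanishing on $V_0$, so $u(x) = 0$ for $x \in V_0$, and that $u \not\equiv 0$ because $u_{m_0}$ is nonzero on $V_{m_0} \setminus V_{m_0 - 1}$ and the extension formula \eqref{eq4} scales these nonzero values by nonzero factors. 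Hence $\lambda$ is a genuine Dirichlet eigenvalue of $\Delta$ with eigenfunction $u$.

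The main obstacle I expect is point (ii) in the borderline case, and more precisely making the interchange of limits rigorous: one is asserting $\lim_m N^{m+1} H_m u(p^m) = (\lim_m N^{m+1}\lambda_m)(\lim_m u_m(p^m))$, which is fine once both factor-limits exist, but the subtlety is confirming that $u_m(p^m) = u(p^m)$ really does converge to $u(x)$ for the \emph{specific} diagonal sequence $p^m$ rather than some other sequence on the fibre — this is where the sequence-independence of \eqref{ext full}, the uniform bound $|a_n| \le b$ on the partial products, and the contractivity estimate from section \eqref{cts ext} do the real work. A secondary technical point is checking the definition \eqref{laplacian} in its uniform form (the $\max$ over $p \in V_m \setminus V_{m-1}$) if one wants $u \in D_\mu$ in the strong sense rather than merely satisfying the pointwise equation \eqref{pointwise laplacian}; I would either argue the convergence is uniform in $x$ using the same product estimates, or simply work with the pointwise formulation which is all the eigenvalue equation $\Delta u = -\lambda u$ on $\Sigma_N^+ \setminus V_0$ requires.
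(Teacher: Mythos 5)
Your proof is correct and follows essentially the same route as the paper: apply the eigenvalue equation $H_m u_m(p^m) = -\lambda_m\, u_m(p^m)$ at each level and pass to the renormalised limit, splitting it into the two factor-limits $N^{m+1}\lambda_m \to \lambda$ and $u_m(p^m) \to u(x)$. The paper's own proof is in fact a three-line version of this same computation that leaves the convergence $u_m(p^m) \to u(x)$, the sequence-independence, and the boundary and non-triviality checks implicit, so the extra care you take on those points is a refinement rather than a different argument.
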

\begin{proof}
As discussed in section \eqref{cts ext}, the sequence $\{\lambda_{m} \}_{m\, \ge\, m_{0}}$ converges to zero atleast as fast as $\{\frac{1}{N^{m+1}} \}_{m\, \ge\, m_{0}}$. Thus, the limit $\lambda := \lim\limits_{m\, \to\, \infty} N^{m+1} \lambda_{m}  $ exists. Further, suppose $u$ is as defined in equation \eqref{ext full} with $u_{m} = u|_{V_{m}}$, where $u_{m}$ is an eigenfunction corresponding to the eigenvalue $\lambda_{m}$, for $m \ge m_{0}$. Then we obtain,
\begin{eqnarray*}
\Delta u \ \ & = & \ \ \lim\limits_{m \, \to\, \infty} N^{m+1} H_{m} u_{m} \\
& = & \ \ \lim\limits_{m \, \to \, \infty}- N^{m+1} \lambda_{m}\, u_{m} \\
& = & \ \ -\lambda \, u.
\end{eqnarray*}
Thus we conclude that $\lambda$ is a Dirichlet eigenvalue of $\Delta$.
\end{proof}

\end{document}